\DeclareRobustCommand{\cev}[1]{%
  \mathpalette\do@cev{#1}%
}
\newcommand{\do@cev}[2]{%
  \fix@cev{#1}{+}%
  \reflectbox{$\m@th#1\vec{\reflectbox{$\fix@cev{#1}{-}\m@th#1#2\fix@cev{#1}{+}$}}$}%
  \fix@cev{#1}{-}%
}
\newcommand{\fix@cev}[2]{%
  \ifx#1\displaystyle
    \mkern#23mu
  \else
    \ifx#1\textstyle
      \mkern#23mu
    \else
      \ifx#1\scriptstyle
        \mkern#22mu
      \else
        \mkern#22mu
      \fi
    \fi
  \fi
}
\def\slashedarrowfill@#1#2#3#4#5{%
  $\m@th\thickmuskip0mu\medmuskip\thickmuskip\thinmuskip\thickmuskip
\relax#5#1\mkern-7mu%
\cleaders\hbox{$#5\mkern-2mu#2\mkern-2mu$}\hfill
\mathclap{#3}\mathclap{#2}%
\cleaders\hbox{$#5\mkern-2mu#2\mkern-2mu$}\hfill
\mkern-7mu#4$%
}
\def\rightslashedarrowfill@{%
  \slashedarrowfill@\relbar\relbar\mapstochar\rightarrow}
\newcommand\xslashedrightarrow[2][]{%
  \ext@arrow 0055{\rightslashedarrowfill@}{#1}{#2}}
\newtheorem{definition}{Definition}
\newtheorem{remark}{Remark}
\newtheorem{lemma}{Lemma}
\newtheorem{construction}{Construction}
\newtheorem{theorem}{Theorem}
\newtheorem{corollary}{Corollary}
\newtheorem{proposition}{Proposition}
\DeclareMathOperator{\Ca}{\mathcal{C}}
\DeclareMathOperator{\Da}{\mathcal{D}}
\DeclareMathOperator{\Ea}{\mathcal{E}}
\DeclareMathOperator{\id}{\textbf{id}}
\DeclareMathOperator{\topro}{\xslashedrightarrow{}} %
\newcommand{\VCat}[1]{{#1}\textbf{-Cat}}
\newcommand{\nanograph}[1]{\begin{tabular}{@{}c@{}}\includegraphics[scale=.125]{#1}\end{tabular}}
\newcommand{\tinygraph}[1]{\begin{tabular}{@{}c@{}}\includegraphics[scale=.25]{#1}\end{tabular}}
\newcommand{\ingraph}[1]{\begin{tabular}{@{}c@{}}\includegraphics[scale=.5]{#1}\end{tabular}}
\newcommand{\fullgraph}[1]{\begin{tabular}{@{}c@{}}\includegraphics{#1}\end{tabular}}
\newcommand{\longsquiggly}{\xymatrix{{}\ar@{~>}[r]&{}}}
\newcommand{\comment}[1]{}
\newcommand{\realize}[1]{\left| {#1} \right|}
\newbox\gnBoxA
\newdimen\gnCornerHgt
\newdimen\gnArgHgt
\def\godelnum #1{%
\setbox\gnBoxA=\hbox{$#1$}%
\gnArgHgt=\ht\gnBoxA%
\ifnum     \gnArgHgt<\gnCornerHgt \gnArgHgt=0pt%
\else \advance \gnArgHgt by -\gnCornerHgt%
\fi \raise\gnArgHgt\hbox{$\ulcorner$} \box\gnBoxA %
\raise\gnArgHgt\hbox{$\urcorner$}}
\title{A Yoneda-style embedding for virtual equipments}
\date{}
\author{David Jaz Myers}
\begin{document}

\maketitle

\begin{abstract}
    In this paper, we exhibit a ``Yoneda''-style embedding of any virtual equipment into the virtual equipment of categories enriched in it. We show that this embedding preserves composition, is full on 2-cells and arrows, and coreflective on proarrows.
\end{abstract}

\section{Introduction}

In his 1973 paper \cite{LawvereMetric}, Lawvere remarks
\begin{quote}
It is a banality that all mathematical structures of a given kind constitute the objects of a category; the sequence: elements/structures/categories thus has led some people to attempt to characterize the philosophical significance of the theory of categories as that of a ``third level of abstraction''. But the theory of categories actually penetrates much more deeply than that attempted characterization would suggest toward summing up the essence of mathematics. The kinds of structures which actually arise in the practice of geometry and analysis are far from being ``arbitrary'', and indeed in this paper we will investigate a particular case of the way in which logic should be specialized to take account of this experience of non-arbitrariness, as concentrated in the thesis that \emph{fundamental} structures are themselves categories. (\cite[p.~135]{LawvereMetric})
\end{quote}

In other words, not only are the  most fundamental structures of mathematics organized in categories, they are in many cases (enriched) categories themselves. In this paper, we will embed any virtual equipment into the virtual equipment of categories enriched in it. This gives a formal flair to Lawvere's thesis: so long as our objects of interest can be organized profitably into a virtual equipment, then they and their morphisms may be realized as enriched categories and functors.

Virtual equipments are a special sort of virtual double categories, which are to double categories as multicategories are to monoidal categories. Virtual double categories were introduced as \emph{multicat{\'e}gories} by Burroni \cite{Burroni}, and used as a general setting for enrichment as \emph{fc-multicategories} by Leinster \cite{Leinster}. In \cite{Cruttwell}, Cruttwell and Shulman coin the name ``virtual double category'' and define virtual equipments as certain sorts of virtual double categories especially suited to the study of generalized multicategories. 

Virtual equipments are a good setting in which to enrich categories, generalizing enrichment in monoidal categories, multicategories, bicategories, and equipments simultaneously. Furthermore, enriched categories with their functors and profunctors form virtual equipments --- even in cases where the enrichment base is not cocomplete enough to ensure the existence of composites of profunctors. We prove a converse of sorts to this statement, showing that if a class of structures forms a virtual equipment, then they can be realized as enriched categories of a sort. This gives an embedding of any virtual equipment into the virutal equipment of categories enriched in it. 

\begin{construction}
There is a ``Yoneda''-style embedding $| \cdot | : \Ea \rightarrow \VCat{\Ea}$ of a virtual equipment $\Ea$ into the virtual equipment of categories enriched in it.
\end{construction}

  This construction specializes nicely to familiar cases. If the virtual equipment is a monoidal equipment, as the equipments of categories enriched in a suitably cocomplete monoidal category are, then we can restrict our construction to get a double functor $\Ea \rightarrow \VCat{h\Ea(1, 1)}$ from $\Ea$ to the virtual equipment of categories enriched in the monoidal category $h\Ea(1, 1)$ of proarrows on the monoidal identity $1$. For example, if $\Ea = \textbf{Ring}$, the equipment of rings, homomorphisms, and bimodules, then $|\cdot|$ so restricted interprets each ring as a single object category enriched in abelian groups. It will be clear from the construction that this situation will hold in general; if $\Ea = \VCat{\Ca}$ is the equipment of categories enriched in $\Ca$, then $h\Ea(1,1) \simeq \Ca$ and the restricted double functor $|\cdot| : \Ea \to \VCat{h\Ea(1,1)}$ will be an equivalence.

To describe the construction and prove our main theorem about it, we will use a graphical calculus for virtual equipments based on a similar calculus for equipments developed by the author \cite{myers2016string}. The calculus is an extension of the usual string diagrams for bicategories, and we will explain it in Section \ref{sec:virtual.equipments}.

In Section \ref{sec:enriched.cats}, we will use the graphical calculus to construct the virtual equipment of categories enriched in a virtual equipment. Then, in Section \ref{sec:embedding} we will construct the embedding. Finally, in Sections \ref{sec:functoriality} and \ref{sec:properties} we will prove the following properties of the embedding in our main theorem.

\begin{theorem}
The embedding $|\cdot| : \Ea \to \VCat{\Ea}$:
\begin{enumerate}
    \item Preserves composites,
    \item Is fully faithful on 2-cells,
    \item Is full on arrows, and
    \item Is coreflective on proarrows in the sense that the induced map $h\Ea(A, B) \to h(\VCat{\Ea})(|A|, |B|)$ is fully faithful and admits a right adjoint.
\end{enumerate}
\end{theorem}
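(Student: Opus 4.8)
The plan is to reduce every clause to the single fact, supplied by the construction, that $|A|$ is the $\Ea$-enriched category with one object of extent $A$ whose hom-proarrow is the unit $U_A$, with composition and identity furnished by the unit cell. Because the source and target are one-object categories, each piece of enriched structure between $|A|$ and $|B|$ has exactly one ``component,'' and the graphical calculus lets me rewrite that component as a cell of $\Ea$. I would set up this dictionary once — identifying cells in $\VCat{\Ea}$ between one-object categories with cells in $\Ea$ — and then feed it into each clause.

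Clauses (2) and (3) are then direct. For (2), unfolding an enriched transformation $|f| \Rightarrow |g|$ between $|f|, |g| : |A| \to |B|$ gives a single naturality cell, which the dictionary identifies with a cell $U_A \Rightarrow U_B$ over $(f,g)$ — exactly a $2$-cell $f \Rightarrow g$ in $\Ea$ — while the enriched naturality axiom becomes the cell axiom; this bijection is full faithfulness on $2$-cells. For (3), an enriched functor $G : |A| \to |B|$ must send the unique object of $|A|$ to that of $|B|$, producing an extent arrow $f : A \to B$ together with an action cell $U_A \Rightarrow U_B$ over $f$; the unit-preservation axiom forces this cell to be the canonical action $U(f)$, so $G = |f|$ and $|\cdot|$ is full on arrows.

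For clause (1), I would recall that a composite of proarrows in a virtual equipment is witnessed by an opcartesian cell and show that $|\cdot|$ carries opcartesian cells to opcartesian cells. The key point is that, via the dictionary, composing enriched profunctors between one-object categories amounts to composing their underlying proarrows in $\Ea$; so a cell exhibiting $r$ as a composite of $p$ and $q$ maps to a cell exhibiting $|r|$ as a composite of $|p|$ and $|q|$, and I would verify its universal property by transporting cells out of $|r|$ back to cells out of $r$ through the identification of cells between one-object categories. Since $U_{|A|} = |U_A|$ by construction, units are preserved as well.

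The substance is clause (4). The fully faithful half runs as in (2): a transformation $|p| \Rightarrow |p'|$ is a cell $p \Rightarrow p'$ intertwining the canonical left and right unit actions, and naturality of the unitors makes every cell intertwine them, so $h\Ea(A,B) \to h(\VCat{\Ea})(|A|,|B|)$ is a bijection on $2$-cells. The harder half is the right adjoint. Here I would identify $h(\VCat{\Ea})(|A|,|B|)$ with the category of proarrows $A \topro B$ carrying $(U_B, U_A)$-bimodule structure (the actions recorded as multicells, since composites need not exist), the image of $|\cdot|$ being exactly those equipped with the \emph{canonical} unitor actions. The right adjoint $R$ should coreflect an arbitrary such bimodule onto its canonical part, built from the unit cells of $\Ea$, with counit assembled from the given action cells; I would then check the adjunction bijection $\Hom(|p|, P) \cong \Hom(p, R P)$ and that the unit $\id \Rightarrow R\,|\cdot|$ is invertible, so that $|\cdot|$ is a genuine coreflection. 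I expect this to be the main obstacle: one must pin down precisely which bimodule structures are representable, verify that coreflecting is functorial and adjoint, and confirm that the construction specializes so that in the monoidal case every bimodule is already canonical and $R$ becomes an inverse equivalence, matching the remark about $\VCat{\Ca}$.
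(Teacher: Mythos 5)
Your proposal rests on a misidentification of the functor in question: you take $|A|$ to be the one-object $\Ea$-category whose single hom is the unit $h_A$, whereas the paper's canonical embedding sends $A$ to the category whose \emph{objects are all vertical arrows} $x : X \topro\!\!\!\!\!\to A$ (generalized elements, with extent the domain $X$), with hom from $x$ to $y$ the restriction $h_A(x,y)$, i.e.\ the composite of the companion of $x$ with the conjoint of $y$. This ``Yoneda-style'' choice is not cosmetic, and every clause depends on it. A functor $|A| \to |B|$ does not ``send the unique object to the unique object''; it may permute the many objects and change their extents, so your argument for (3) does not apply. The paper instead proves two lemmas: every functor into a representative is isomorphic to one that does not change extent, and every such functor is determined up to natural isomorphism by its value at the distinguished element $\id_A$. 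Likewise a profunctor $|A| \topro |B|$ has a component at \emph{every} pair of generalized elements, which is exactly why $|\cdot|$ is merely coreflective on proarrows, with coreflector $J \mapsto J(\id_A, \id_B)$ given by evaluation at the identity elements. On your one-object reading the situation degenerates: by the universal property of $h_A$ as a nullary composite, the unit law forces the $(h_A, h_B)$-action on any proarrow $A \topro B$ to be unique, so your version of clause (4) would yield an isomorphism of hom-categories rather than a nontrivial coreflection --- a sign that you are analyzing a different functor from the one the theorem is about. The missing idea throughout is the Yoneda step: take components at the identity elements and show that the full enriched datum is recovered from them (exactly for $2$-cells, up to isomorphism for arrows, only via a counit for proarrows).

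Even within your own framework, clause (4) is left as a program rather than a proof (``I expect this to be the main obstacle''), and the claim that in the monoidal case every bimodule is already canonical is not what the paper asserts --- the remark there is that the \emph{restricted} functor $\Ea \to \VCat{h\Ea(1,1)}$ is an equivalence when $\Ea$ is already of the form $\VCat{\Ca}$, not that the embedding becomes full on proarrows. The paper explicitly warns that its proofs ``would need to be significantly adapted to deal with restrictions of the domains of elements,'' i.e.\ with exactly the one-object-style representatives you describe; that adaptation is what your proposal omits. Clause (1) is the one place where your outline matches the paper's strategy in spirit (show the image of a composite witness is again a composite witness), but the verification must be done componentwise over arbitrary generalized elements and one must check that the induced factorization is again a profunctor morphism, which requires the action axioms and is not a consequence of a one-object dictionary.
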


As a corollary, $|\cdot|$ reflects both equivalence (in the $2$-category of arrows $v\Ea$) and Morita equivalence (in the bicategory of composable proarrows).

\tableofcontents

\section{Virtual Equipments}\label{sec:virtual.equipments}
A virtual equipment is to an equipment what a multicategory is to a monoidal category; while in an equipment we may compose proarrows, in a virtual equipment we may only take formal composites of proarrows in the domains of our 2-cells. We move to virtual equipments to construct the canonical embedding because the existence of composites of enriched profunctors requires certain cocompleteness assumptions which do not hold in general equipments. Categories enriched in any virtual equipment will, however, form virtual equipment.

Virtual equipments are useful even in more traditional enrichment settings when the base is not suitably cocomplete. This may occur because the base of enrichment lacks colimits, but it can also occur if it admits all small colimits. If we allow profunctors over large categories, then they may fail to compose even if the enriching base is (small) cocomplete. Therefore, large\footnote{In the virtual double category of large categories and profunctors, a category admits a \emph{unit} (a nullary composite) if and only if it is locally small. So, in order to get a virtual equipment, we must restrict ourselves to locally small categories.} categories and profunctors form a virtual equipment, but not an equipment.

We begin with the notion of a virtual double category, which was introduced under the name \emph{multicat{\'e}gory} by Burroni in \cite{Burroni}, and used by Leinster as a general setting for enrichment under the name \emph{fc-multicategory} \cite{Leinster}. We will follow along with the presentation in sections 2 and 7 of Cruttwell and Shulman \cite{Cruttwell}.

A virtual double category is to a double category what a multicategory is to a monoidal category. That is, we are no longer allowed to compose proarrows in general, but we modify our 2-cells so that they may have formal composites of proarrows as their horizontal domain. We define a virtual double category now, using Definition 2.1 of Cruttwell and Shulman \cite{Cruttwell}.

\begin{definition}
A \emph{virtual double category} $\Da$ consists of:
\begin{enumerate}
    \item A category $v\Da$ of ``vertical arrows''. We draw the objects of this category as colored plane regions $\tinygraph{oY}$, $\tinygraph{oR}$, and draw arrows as vertical wires 
    $$\ingraph{vYR} : \tinygraph{oY} \to \tinygraph{oR}$$ 
    separating the plane regions.
    \item For each two objects $\tinygraph{oY}$ and $\tinygraph{oR}$ of $v\Da$, a set $h\Da\left(\tinygraph{oY}, \tinygraph{oR}\right)$ of ``horizontal arrows'' or ``proarrows''. We draw a horizontal arrow as a horizontal wire 
    $$\ingraph{hYR} \in h\Da\left(\tinygraph{oY}, \tinygraph{oR}\right)$$
    separating the plane regions.
    \item A set of 2-cells $\alpha$, each of which as a vertical domain and codomain, a horizontal codomain, and a list of horizontal domains arranged as follows:
    \[\begin{tikzcd} X_0 \arrow{dd}[swap]{f} \arrow{r}{J_1} & X_1 \arrow{r}{J_2} & X_3 \arrow{r}{J_3} & \cdots \arrow{r}{J_k} & X_k \arrow{dd}{g}\\
    & & \alpha & & \\
    Y_0 \arrow{rrrr}[swap]{K} & & & & Y_k
    \end{tikzcd}\]
    We draw this as a node connecting the wires:
    $$\fullgraph{Virtual2Cell}$$
    
    \item For each proarrow $J \in h\Da(X, Y)$, a 2-cell 
    \[\begin{tikzcd}
    X \arrow[equals]{d} \arrow{r}{J} & Y \arrow[equals]{d}\\
    X  \arrow{r}[swap]{J} & Y
    \end{tikzcd}\]
    We do not draw this 2-cell; it is represented in the same way as the proarrow $J$.
    
    \item For each arrangement
    \[\begin{tikzcd}
    X_{0} \arrow[dashed]{rr}{J_{11}\cdots J_{1k_1}} \arrow{dd} & & X_1 \arrow[dashed]{rr}{J_{21}\cdots J_{2k_2}} \arrow{dd} & & \cdots \arrow[dashed]{rr}{\cdots} \arrow{dd} & & X_{\ell} \arrow{dd}\\
    & \alpha_1 & & \alpha_2 &  & \cdots & \\
    Y_0 \arrow{dd} \arrow{rr}{K_1} & & Y_1 \arrow{rr}{K_2} & & \cdots \arrow{rr}{\cdots}& & Y_{\ell} \arrow{dd} \\
    & & & \beta &  & & \\
    Z_0 \arrow{rrrrrr}{H} & & & & & & Z_{\ell} \\
    \end{tikzcd}\]
    a 2-cell
    \[\begin{tikzcd}
    X_{0} \arrow[dashed]{rr}{J_{11}\cdots J_{1k_1}} \arrow{d}  & & X_1 \arrow[dashed]{rr}{J_{21}\cdots J_{2k_2}}  & & \cdots \arrow[dashed]{rr}{\cdots}  & & X_{\ell} \arrow{d}\\
    Y_0 \arrow{d}  & &  & \beta(\alpha_1, \alpha_2, \cdots) & & & Y_{\ell} \arrow{d} \\
    Z_0 \arrow{rrrrrr}{H} & & & & & & Z_{\ell} \\
    \end{tikzcd}\]
    We draw this by connecting the wires incident to the nodes. For example, we compose the 2-cell $\ingraph{Virtual_CompCellRep1}$ with $\ingraph{Virtual_CompCellRep2}$ and $\ingraph{Virtual_CompCellRep3}$ to get the composite
    $$\fullgraph{Virtual_CompCellRepComposed}$$
    
    \item This data satisfies the identity and associativity axioms. These say, respectively:
    \begin{itemize}
        \item \textbf{Identity:} $\beta(\id_{J_1},\, \cdots,\, \id_{J_k}) = \beta$ and $\id_J(\alpha) = \alpha$.
        \item \textbf{Associativity:} $\beta\big(\alpha_1(\gamma_{11},\,\cdots,\,\gamma_{1q_1}),\,\cdots,\,\alpha_k(\gamma_{k1},\,\cdots,\,\gamma_{kq_k})\big) = \beta(\alpha_1,\,\cdots,\,\alpha_k)(\gamma_{11},\,\cdots,\,\gamma_{kq_k})$.
    \end{itemize}
    These laws guarantee that each diagram may be read as a unique 2-cell.
\end{enumerate}
\end{definition}

It should come as no surprise that every double category is also a virtual double category by taking the virtual 2-cells with a given list of proarrows as domain to be the actual 2-cells with the composite of that list as domain.

We can define the existence of an actual composite in a virtual double category with a universal property.
\begin{definition}
Given proarrows $J_1$, $\ldots$, $J_k$ and $J$ in a virtual double category, $J$ is the \emph{composite} of $J_1$, $\ldots$, $J_k$ if there is a 2-cell 
\[\begin{tikzcd} X_0 \arrow[equals]{d}\arrow{r}{J_1} & X_1 \arrow{r}{J_2} & X_3 \arrow{r}{J_3} & \cdots \arrow{r}{J_k} & X_k \arrow[equals]{d}\\
    X_0 \arrow{rrrr}[swap]{J} & & & & X_k
    \end{tikzcd}\]
such that every 2-cell
\[\begin{tikzcd} Z \arrow[dashed]{r}{K_i} \arrow{d} & X_0  \arrow{r}{J_1} & \cdots \arrow{r}{J_k} & X_k  \arrow[dashed]{r}{Q_i} & W \arrow{d}\\
    Y_0 \arrow{rrrr}[swap]{K} & & & & Y_k
    \end{tikzcd}\]
factors uniquely as
\[\begin{tikzcd}
    Z \arrow[dashed]{r}{K_i} \arrow[equals]{d} & X_0 \arrow[equals]{d} \arrow{r}{J_1} & \cdots \arrow{r}{J_k} & X_k \arrow[equals]{d} \arrow[dashed]{r}{Q_i} & W \arrow[equals]{d}\\
    Z \arrow[dashed]{r}{K_i} \arrow{d} & X_0 \arrow{rr}{J} & & X_k \arrow[dashed]{r}{Q_i} & W \arrow{d}\\
    Y_0 \arrow{rrrr}[swap]{K} & & & & Y_k
    \end{tikzcd}.\]
\end{definition}

The associativity of composites follows from the associativity of composition.
\begin{lemma}\label{Lem:VirtualComp}
Let $J_{ij}$ be sequence of proarrows in a virtual double category with composite $J$. Suppose that for fixed $i$, there is a composite $J_i$ of the $J_{ij}$. Then $J$ is the composite of the $J_i$.
\end{lemma}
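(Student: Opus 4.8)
The plan is to show that $J$ satisfies the defining universal property of the composite of the $J_i$ by exhibiting an explicit composition cell $\nu \colon (J_1, \ldots, J_\ell) \Rightarrow J$ and verifying its universal property from those of $\mu$ (the composition cell for $J$ over the full list $J_{11}, \ldots, J_{\ell k_\ell}$) and of the $\mu_i$ (the composition cell for each $J_i$ over $J_{i1}, \ldots, J_{ik_i}$). Throughout I write $\beta(\alpha_1, \ldots)$ for the substitution composite and freely reassociate nested substitutions using the associativity axiom.

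First I would construct $\nu$. The horizontal domain of $\mu$ has the block $J_{11}, \ldots, J_{1k_1}$ as an initial segment, with the remaining proarrows $J_{21}, \ldots, J_{\ell k_\ell}$ as a trailing list, so the universal property of $\mu_1$ factors $\mu$ uniquely through $\mu_1$, yielding a cell with domain $(J_1, J_{21}, \ldots, J_{\ell k_\ell})$. Iterating this for $i = 2, \ldots, \ell$ — factoring through each $\mu_i$ in turn and collecting the results via associativity — produces a cell $\nu \colon (J_1, \ldots, J_\ell) \Rightarrow J$ with identity vertical legs satisfying $\nu(\mu_1, \ldots, \mu_\ell) = \mu$.

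Next I would verify that $\nu$ is opcartesian. Given any cell $\gamma$ with horizontal domain $(\vec K, J_1, \ldots, J_\ell, \vec Q)$, substitute the $\mu_i$ into its $J_i$-inputs to form $\tilde\gamma := \gamma(\id_{\vec K}, \mu_1, \ldots, \mu_\ell, \id_{\vec Q})$, a cell over the full list $(\vec K, J_{11}, \ldots, J_{\ell k_\ell}, \vec Q)$. Since $\mu$ is the composite of the full list, $\tilde\gamma$ factors uniquely as $\bar\gamma(\id_{\vec K}, \mu, \id_{\vec Q})$. Using $\nu(\mu_1, \ldots, \mu_\ell) = \mu$ and associativity, one checks that $\bar\gamma(\id_{\vec K}, \nu, \id_{\vec Q})$ and $\gamma$ agree after substituting the $\mu_i$; the desired factorization $\gamma = \bar\gamma(\id_{\vec K}, \nu, \id_{\vec Q})$ then follows once we know that substituting $(\mu_1, \ldots, \mu_\ell)$ is \emph{jointly faithful} on cells with domain $(\vec K, J_1, \ldots, J_\ell, \vec Q)$. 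Uniqueness follows the same way: any two factorizations through $\nu$ become equal after substituting the $\mu_i$ (both reduce to a factorization of $\tilde\gamma$ through $\mu$), hence are equal by the uniqueness clause for $\mu$.

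The main work — and the only real subtlety — is the joint faithfulness of substitution by $(\mu_1, \ldots, \mu_\ell)$, which I would establish by applying the uniqueness clause of each $\mu_i$'s universal property one index at a time, exactly mirroring the construction of $\nu$. Everything else is bookkeeping with the surrounding lists $\vec K, \vec Q$ and repeated appeals to associativity, which guarantees the nested substitutions reassociate freely. I expect no genuine obstacle beyond keeping this bookkeeping disciplined; the content of the lemma is entirely the mutual compatibility of these three universal properties.
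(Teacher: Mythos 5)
Your proposal is correct and is essentially the paper's own argument, just spelled out in full: the paper likewise builds $\nu$ by factoring the total composition cell through each $\mu_i$, then verifies the universal property by first invoking the universal property of $J$ over the full list and then the uniqueness clauses of the $\mu_i$ (which is exactly your ``joint faithfulness'' step). The only difference is level of detail — the paper compresses all of this into two sentences.
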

\begin{proof}
By the universal property of $J_i$, the structure 2-cell $J_{kj} \to J$ factors through $J_{ij} \to J_i$. This gives a 2-cell from $J_i \to J$ for each $i$. Together, these witness $J$ as the composite of the $J_i$ by applying the universal property of $J$ as the composite of the $J_{ij}$ and then the universal property of the $J_i$.
\end{proof}

A virtual double category with all composites is equivalently a double category. However, there is a useful special case of composites which are very common in virtual double categories even when general composites do not exist: \emph{nullary} composites, or \emph{units}

\begin{definition}
For an object $A$, a \emph{unit} $h_A : A \topro A$ for $A$ is a composite of the empty list of proarrows starting and ending at $A$. 
\end{definition}
Because any 2-cell whose horizontal domain included $A$ factors uniquely through a 2-cell with $h_A$ inserted, we will simply not draw $h_A$ and refer to it in the same way as the object $A$. So, if $A$ is $\tinygraph{oY}$, then we will also refer to $h_A$ by $\tinygraph{oY}$. 

A virtual equipment is a virtual double category with all units and where proarrows can be \emph{restricted} along arrows. This allows proarrows to really function like \emph{bimodules} of categories, since they can be restricted along arrows --- that is, applied to generalized elements to yield new bimodules.
\begin{definition}
        A cell $\ingraph{2QWCG}$ is called \emph{cartesian} if for any $\ingraph{2CellvComp}$, there exists a unique $\ingraph{2CellMultiple}$ so that
        $$\fullgraph{2CellvComp} = \fullgraph{2CellvComp2}.$$
    \end{definition}

The proarrow $\ingraph{hQW}$ is determined uniquely up to isomorphism in a cartesian cell $\ingraph{2QWCG}$. For this reason, we give it the canonical name $K(g, f)$ and call it the \emph{restriction} of $K = \ingraph{hCG}$ along $g = \tinygraph{vQC}$ and $f = \tinygraph{vWG}$. Note that the restriction of $\ingraph{hCG}$ by identities is itself.

\begin{definition}
A \emph{virtual equipment} is a virtual double category in which every object has a unit and the restriction $K(g, f)$ exists for every compatible triple of $K = \tinygraph{hCG}$, $g = \tinygraph{vQC}$, and $f = \tinygraph{vWG}$.
\end{definition}

We can use the universal property of restrictions to find the conjoint and companion bends for an arrow $\tinygraph{vRY}$. We'll define the conjoint here; the companion works similarly.

Given an arrow $\tinygraph{vRY}$ in a virtual equipment, define its conjoint $\tinygraph{hYRr}$ as the restriction of $\tinygraph{oY}$ along $\tinygraph{oY}$ and $\tinygraph{vRY}$.\footnote{Remember, we refer to the object $\tinygraph{oY}$ and its unit by the same diagram; here we are referring to the unit.} We write the defining cartesian cell as $\tinygraph{Bend_BL_RY}$. By the universal property of the restriction, the 2-cell $\tinygraph{vRY}$ factors through $\tinygraph{Bend_BL_RY}$ uniquely. Write this unique factor as $\tinygraph{Bend_RT_RY}$, so that the factorization reads
$$\ingraph{vRY} = \ingraph{ZigZagRv_RY}.$$
This is one of the kink identities for the conjoint. The other kink lemma from the universal property of $\tinygraph{Bend_BL_RY}$ as well. The cell $\ingraph{Virtual_BendFactor2}$ factors through $\tinygraph{Bend_BL_RY}$ in two ways:
$$\ingraph{Virtual_BendFactor1} = \ingraph{Virtual_BendFactor2} = \ingraph{Virtual_BendFactor3}.$$
Thefore, by the uniqueness part of the universal property of $\tinygraph{Bend_BL_RY}$, we  must have
$$\ingraph{hYRr} = \ingraph{ZigZagRh_RY}.$$
These equalities justify the visual representation of the companion and conjoint as bends from vertical to horizontal. Using the kink identities for the companion and conjoint, we show can show the restriction $K(g, f)$ of $K$ by $g$ and $f$ is the composite of of $K$ with the companion of $g$ and conjoint of $f$, justifying our use of the diagram $\ingraph{VirtualRestriction}$ for the defining cartesian cell of any restriction.
\begin{lemma}
For a proarrow $\tinygraph{hCG}$ and arrows $\tinygraph{vQC}$ and $\tinygraph{vWG}$ which admit the restriction $\tinygraph{Virtual_RestrictionCell}$ in a virtual equipment, $\tinygraph{Virtual_RestrictionPro}$ is the composite of $\tinygraph{hCG}$ with $\tinygraph{hQCl}$ and $\tinygraph{hGWr}$.
\end{lemma}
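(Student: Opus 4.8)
The plan is to verify directly that the restriction proarrow $K(g,f) \colon Q \topro W$ satisfies the universal property that exhibits it as the composite of the companion $\hat g \colon Q \topro C$ of $g$, the proarrow $K \colon C \topro G$, and the conjoint $\check f \colon G \topro W$ of $f$. Thus I must produce a structure 2-cell $\mu \colon (\hat g,\, K,\, \check f) \Rightarrow K(g,f)$ with identity vertical boundaries, and then show that every 2-cell whose horizontal domain contains the consecutive list $\hat g,\, K,\, \check f$ (possibly flanked by other proarrows) factors through $\mu$ uniquely.

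First I would construct $\mu$ by bending. The cartesian cell $\pi$ defining the restriction has codomain $K$ and vertical boundaries $g$ and $f$; by its universal property, giving a cell into $K(g,f)$ with trivial outer boundaries is the same as giving a cell into $K$ with boundaries $g$ on the left and $f$ on the right. Such a cell is assembled from the defining bend cells of the companion and conjoint: the cartesian cell of $\hat g$ straightens the leftmost wire into the vertical arrow $g$, the cartesian cell of $\check f$ straightens the rightmost wire into $f$, and the middle wire remains $K$; since units are invisible, this is exactly a cell $(\hat g,\, K,\, \check f) \Rightarrow K$ over $g$ and $f$. Factoring it through $\pi$ yields $\mu$.

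To check universality I would exhibit a two-sided inverse to precomposition with $\mu$. Given a cell $\beta$ with $\hat g,\, K,\, \check f$ in its horizontal domain, I would \emph{unbend}: using the dual bend cells — the factors produced in the kink identities for the companion and conjoint — I replace the $\hat g$ and $\check f$ wires by the vertical arrows $g$ and $f$, landing in a cell whose domain carries $K$ where the three proarrows stood and whose flanking boundaries now carry $g$ and $f$; cartesianness of $\pi$ then factors this uniquely through $K(g,f)$, producing the required factor of $\beta$. Equivalently, one may treat the two sides separately, proving first that $K \odot \check f = K(\id, f)$ and $\hat g \odot K = K(g,\id)$ by the same one-wire argument and then assembling the three-fold composite via Lemma~\ref{Lem:VirtualComp}.

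The main obstacle is the verification that bending and unbending are mutually inverse, and this is exactly where the two kink identities do the real work: bending a wire down and then back up is the identity precisely because of the snake/kink cancellations established for the companion and conjoint, while the uniqueness half of the factorization reduces to chaining the uniqueness clauses of three separate universal properties — the cartesian cell $\pi$ of the restriction and the two cartesian cells defining $\hat g$ and $\check f$. Tracking these cancellations through the graphical calculus so that the bend cancellations and the cartesian factorizations compose associatively is the only delicate bookkeeping; everything else is routine diagram manipulation.
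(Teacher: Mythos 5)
Your proposal is correct and follows essentially the same route as the paper: the structure cell is obtained by factoring the bent cell (companion bend, $K$, conjoint bend) through the cartesian cell of the restriction, and the universal property is verified by unbending via the kink identities and invoking the uniqueness of the cartesian factorization. The paper's proof is just a terser graphical rendering of exactly this argument.
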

\begin{proof}
The cell $\ingraph{VirtualRestriction}$ factors uniquely through $\tinygraph{Virtual_RestrictionCell}$ as $\ingraph{VirtualRestriction} = \ingraph{Virtual_Restriction_Defn}$, so we will take the structure morphism of the composite to be $\tinygraph{Virtual_Restriction_Defn2}$. It remains to show that it satisfies the universal property. 
Given a 2-cell $\ingraph{Virtual_RestrictionComp_Cell}$, we can factor it through $\tinygraph{Virtual_Restriction_Defn2}$ as
$$\fullgraph{Virtual_RestrictionComp_Cell2} = \fullgraph{Virtual_RestrictionComp_Cell3} = \fullgraph{Virtual_RestrictionComp_Cell}.$$
The uniqueness of this factorization follows from the uniqueness of $\tinygraph{Virtual_Restriction_Defn2}$.
\end{proof}
\begin{corollary}\label{Lem:CompCompConj}
Given two arrows $\tinygraph{vBY}$ and $\tinygraph{vRY}$ in a virtual equipment, the composite $\tinygraph{Embed_Hom_BYR}$ of their respective companion and conjoint exists.
\end{corollary}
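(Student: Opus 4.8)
The plan is to realize the desired composite as a restriction of the unit on the common codomain of the two arrows, and then to strip that unit off. Write $b = \tinygraph{vBY}$ and $r = \tinygraph{vRY}$ for the two given arrows; both have codomain the object $\tinygraph{oY}$, whose unit I will denote $h_Y$. Since we are working in a virtual equipment, every restriction exists; in particular the restriction $h_Y(b, r)$ of the unit $h_Y$ along $b$ and $r$ exists. Recall moreover that the companion of $b$ is by definition the restriction $h_Y(b, \id)$ and that the conjoint $\tinygraph{hYRr}$ of $r$ is the restriction $h_Y(\id, r)$, so both of these proarrows are already at hand.

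First I would apply the preceding lemma to the proarrow $K = h_Y$ together with the arrows $g = b$ and $f = r$. That lemma identifies the restriction $h_Y(b, r)$ as the composite of the three proarrows arranged in order: the companion of $b$, then $h_Y$, then the conjoint of $r$. In other words, the length-three list $(\text{companion of } b,\ h_Y,\ \text{conjoint of } r)$ has a composite, namely $h_Y(b, r)$.

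It then remains to absorb the middle factor. Because $h_Y$ is the nullary composite at $\tinygraph{oY}$, composing the companion of $b$ with $h_Y$ on the right returns the companion of $b$ itself; this is exactly the unitality encoded in our convention of not drawing units, and it says that the sublist $(\text{companion of } b,\ h_Y)$ has composite the companion of $b$. Grouping the length-three list into the two blocks $(\text{companion of } b,\ h_Y)$ and $(\text{conjoint of } r)$ and invoking Lemma \ref{Lem:VirtualComp}, the composite $h_Y(b, r)$ of the flattened list is also the composite of the two grouped proarrows, i.e.\ of the companion of $b$ and the conjoint of $r$. This composite is precisely the proarrow $\tinygraph{Embed_Hom_BYR}$, so its composite exists, as claimed.

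The one step needing genuine care is the unit absorption: one must check that $(\text{companion of } b,\ h_Y)$ really does admit the companion of $b$ as a composite, which amounts to verifying the universal property of a composite using the defining $2$-cell of the unit $h_Y$. Everything else is a direct application of the preceding lemma and of Lemma \ref{Lem:VirtualComp}; in particular no cocompleteness or additional existence hypotheses are required, since the restriction $h_Y(b, r)$ hands us the composite outright.
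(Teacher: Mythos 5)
Your proof is correct and follows the route the paper intends: the corollary is stated immediately after the lemma identifying $K(g,f)$ as the composite of $K$ with the companion of $g$ and conjoint of $f$, and the paper's (implicit) argument is exactly your specialization to $K = h_Y$, whose restriction along the two arrows exists because we are in a virtual equipment. Your extra care in absorbing the unit factor via the universal property of $h_Y$ together with Lemma \ref{Lem:VirtualComp} is the right way to make the unstated step precise.
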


A similar argument can be used to establish the following lemma.
\begin{lemma}\label{Lem:CompCompComp}
Given $\tinygraph{vBY}$ and $\tinygraph{vYR}$ in a virtual equipment, the composite of their companions exists and equals the companion of their composite. Similarly, the composite of their conjoints exists and equals the conjoint of their composite.
\end{lemma}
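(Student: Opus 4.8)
The plan is to reduce both statements to the preceding lemma (that a restriction is the composite of the proarrow with a companion on the left and a conjoint on the right) by exploiting two facts: that companions and conjoints are themselves restrictions of units, and that restriction is transitive. Fix the composable arrows $f : B \to Y$ and $g : Y \to R$, write $gf : B \to R$ for their composite, and write $\hat{f},\hat{g}$ for the companions and $\check{f},\check{g}$ for the conjoints. By the definition of conjoint (and its mirror image for companions), we have $\hat{f} = h_Y(f,\id)$, $\hat{g} = h_R(g,\id)$, and the companion of $gf$ is the restriction $h_R(gf,\id)$. In particular the companion of $gf$ exists in any virtual equipment, so it suffices to identify it with $\hat{f} \odot \hat{g}$.

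First I would record the transitivity of restriction: the vertical pasting of two cartesian cells is again cartesian. This is immediate from the defining universal property, since a unique factorization through each of the two cartesian cells composes to a unique factorization through the pasting. Consequently $K(g_1,f_1)(g_2,f_2) = K(g_1 g_2, f_1 f_2)$ whenever the restrictions involved exist. Applying this to the unit $h_R$, together with the fact (noted in the text) that restriction by identities is trivial, gives
\[ \widehat{gf} = h_R(gf,\id) = h_R(g,\id)(f,\id) = \hat{g}(f,\id). \]

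Now I would invoke the preceding lemma on the restriction $\hat{g}(f,\id)$, taking $K = \hat{g}$ with left arrow $f$ and right arrow $\id_R$. It presents $\hat{g}(f,\id)$ as the composite of $\hat{g}$ with the companion of $f$ on the left and the conjoint of $\id_R$ on the right. Since the conjoint of an identity is $h_R(\id,\id) = h_R$, i.e.\ a unit, and units are invisible in composites, this composite is exactly $\hat{f} \odot \hat{g}$. Combined with the display this yields $\widehat{gf} = \hat{f} \odot \hat{g}$, which both witnesses the existence of the composite of the companions and identifies it with the companion of $gf$. The conjoint statement is proved by the mirror-image argument: writing $\check{g} = h_R(\id,g)$ and using that the companion of an identity is a unit,
\[ h_R(\id,gf) = \check{g}(\id,f) = \check{g} \odot \check{f}, \]
so the conjoint of $gf$ is the composite of the conjoints, with the order of the two factors reversed as expected.

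The only real obstacle is bookkeeping rather than mathematical depth: one must keep the left and right restriction slots distinct, track the directions of the various proarrows, and get the order of composition right (reversed for conjoints), while taking care to justify transitivity of restriction at the level of the universal property rather than presupposing that composites of proarrows exist in general. Once transitivity is in hand, everything is a direct appeal to the preceding lemma and to the triviality of restriction by identities.
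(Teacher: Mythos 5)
Your proof is correct. Note that the paper does not actually write out an argument for this lemma; it only says ``a similar argument can be used,'' pointing back to the proof that the restriction $K(g,f)$ is the composite of $K$ with the companion of $g$ and the conjoint of $f$ --- i.e.\ the intended route is to directly exhibit the structure cell by pasting the two companion (resp.\ conjoint) bends and verify the universal property by hand using the kink identities, exactly as in that proof. You instead take that lemma as a black box and reduce to it: you observe that companions and conjoints are restrictions of units, prove that vertical pasting of cartesian cells is cartesian (so restriction is transitive, $K(g_1,f_1)(g_2,f_2)=K(g_1g_2,f_1f_2)$), compute $\widehat{gf}=h_R(gf,\id)=\hat g(f,\id)$, and then apply the restriction-as-composite lemma with the conjoint of an identity collapsing to a unit. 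This is more modular --- no new diagram chase, and the transitivity of restriction is a reusable fact --- at the small cost of two auxiliary observations you rightly flag: the pasting lemma for cartesian cells (your one-line justification via composing unique factorizations is fine, though the uniqueness step deserves the usual two-stage argument), and the fact that a unit occurring in a list can be deleted without changing what the composite of that list is, which is exactly the universal property of units that the paper's drawing conventions already presuppose. Your bookkeeping of the two restriction slots and of the reversed order for conjoints ($\check g\odot\check f$ versus $\hat f\odot\hat g$) matches the paper's conventions.
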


By putting together Lemma \ref{Lem:VirtualComp}, Corollary \ref{Lem:CompCompConj}, and Lemma \ref{Lem:CompCompComp}, we can show that the composite of a proarrow with any number of companions on top and any number of conjoints on bottom exists.
\begin{lemma}
In a virtual equipment, the composite of a proarrow with any number of companions on top and any number of conjoints on bottom exists and is the restriction of the proarrow by the respective vertical composites.
\end{lemma}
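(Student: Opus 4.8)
The plan is to collapse the stack of companions into a single companion and the stack of conjoints into a single conjoint, to recognize the resulting three-fold composite as a restriction, and then to re-expand it into the full flat composite. Write the companions on top as the companions of vertical arrows $g_1, \ldots, g_m$ and the conjoints on bottom as the conjoints of $f_1, \ldots, f_n$, arranged so that the $g_i$ admit a vertical composite $g$ and the $f_j$ a vertical composite $f$ (empty stacks yielding identities, whose companion and conjoint are units).

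First I would show that the $m$ companions have a composite. Lemma~\ref{Lem:CompCompComp} supplies this for two companions, and an induction on $m$ — grouping the top two at each stage and applying Lemma~\ref{Lem:CompCompComp} together with the associativity of Lemma~\ref{Lem:VirtualComp} — shows that the whole stack composes to the companion $C$ of $g$. The identical argument collapses the conjoints to the conjoint $D$ of $f$.

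Next, the virtual equipment axiom guarantees that the restriction $K(g,f)$ exists, and the lemma identifying a restriction with the composite of a proarrow with a companion and a conjoint exhibits $K(g,f)$ as the composite of the three-term list $(C, K, D)$. Combining this with the previous step, we now have three facts in hand: $C$ is the composite of the companions, $D$ is the composite of the conjoints, and $K(g,f)$ is the composite of $(C, K, D)$.

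Finally I would assemble these into a single composite of the full flat list (all companions, then $K$, then all conjoints). This is the converse reading of Lemma~\ref{Lem:VirtualComp}: I take the structure 2-cell to be the composite of the companion-stacking cell, the conjoint-stacking cell, and the restriction cell $(C,K,D) \to K(g,f)$, and verify its universal property by factoring an arbitrary test 2-cell first through the universal property of $C$, then of $D$, then of $K(g,f)$. Uniqueness at each stage, together with associativity, yields uniqueness of the overall factorization, so $K(g,f)$ is indeed the composite of the full list, and it is by construction the restriction of $K$ by the vertical composites $g$ and $f$. The main obstacle is precisely this last assembly: Lemma~\ref{Lem:VirtualComp} is stated only in the direction that extracts a coarse composite from a fine one, so I must run the universal-property bookkeeping in reverse, taking care that the orientations of the companion and conjoint bends and the order of the vertical composites $g = g_m \circ \cdots \circ g_1$ and $f = f_n \circ \cdots \circ f_1$ match the geometry of the diagram.
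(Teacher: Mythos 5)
Your proposal is correct and follows essentially the same route the paper indicates (the paper offers only the one-line remark that the result follows by combining Lemma~\ref{Lem:VirtualComp}, Corollary~\ref{Lem:CompCompConj}, and Lemma~\ref{Lem:CompCompComp} with the lemma identifying a restriction as the composite of the proarrow with a companion and a conjoint). Your observation that Lemma~\ref{Lem:VirtualComp} is stated in the coarse-from-fine direction, whereas the assembly step here needs the fine-from-coarse direction and hence a separate (routine) universal-property argument, is a legitimate refinement of a point the paper glosses over.
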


\section{Categories Enriched in a Virtual Equipment}\label{sec:enriched.cats}

In this section, we construct the virtual equipment of categories enriched in a virtual equipment. We begin by recalling the requisite definitions, which are due to Leinster \cite{Leinster}.

A category $\Ca$ enriched in a virtual equipment $\Ea$ consists of the following data:
\begin{itemize}
    \item A class of objects $\Ca_0$, with each object $A \in \Ca_0$ associated with an object $\Ca(A) = \tinygraph{oY}$ in $\Ea$ called its \emph{extent}. We will refer to an object of $A$ by the same color as we refer to it's extent; nevertheless, two objects of $\Ca$ may have the same extent. We may therefore be referring to the same object in $\Ea$ using two different colors.
    \item For each pair of objects $\tinygraph{oY}$ and $\tinygraph{oR}$ in $\Ca_0$, a proarrow $\Ca(\tinygraph{oY}, \tinygraph{oR}) = \tinygraph{hYR}$ in $\Ea$.\
    \item For each object $\tinygraph{oY}$ in $\Ca_0$, a $2$-cell $\id_{\nanograph{oY}} = \tinygraph{Virtual_IdCell}$ called the \emph{identity}.
    \item For each triple of objects $\tinygraph{oY}$, $\tinygraph{oR}$, $\tinygraph{oB}$, a 2-cell $\tinygraph{Virtual_CompCell}$ called \emph{composition}.
\end{itemize}
This data satisfies the identity and associativity laws:
$$\fullgraph{Virtual_IdLaw1} = \fullgraph{hYR} = \fullgraph{Virtual_IdLaw2},$$
$$\fullgraph{Virtual_AssocLaw1} = \fullgraph{Virtual_AssocLaw2}.$$

A functor $F : \Ca \to \Da$ between enriched categories consists of the following data:
\begin{itemize}
    \item For each object $\tinygraph{oY}$ in $\Ca$, an object $\tinygraph{Virtual_darkY}$ in $\Da$ and an arrow $\tinygraph{Virtual_FuncObj}$ in $\Ea$ from the extent of $\tinygraph{oY}$ to the extent of $\tinygraph{Virtual_darkY}$.
    \item For each pair of objects $\tinygraph{oY}$ and $\tinygraph{oR}$ of $\Ca$, a 2-cell $\tinygraph{Virtual_FuncHom}$ in $\Ea$.
\end{itemize}
This data satisfies the functor laws:
$$\fullgraph{Virtual_FuncId1} = \fullgraph{Virtual_FuncId2}, \quad\mbox{and}\quad\fullgraph{Virtual_FuncComp1} = \fullgraph{Virtual_FuncComp2}.$$
These laws can be summarized by imagining that the vertical functorial strings lie over the horizontal categorical ones, so that the law above expresses a braiding of a sort. 

A profunctor $J : \Ca \topro \Da$ consists of the following data:
\begin{itemize}
    \item For each $\tinygraph{oY} \in \Ca$ and $\tinygraph{oR} \in \Da$, a proarrow $\tinygraph{hYR}$ in $\Ea$.
    \item For each pair $\tinygraph{oY}$ and $\tinygraph{oB}$ in $\Ca$, a 2-cell $\tinygraph{Virtual_Prof}$ in $\Ea$. Similarly, for each pair $\tinygraph{oR}$ and $\tinygraph{oG}$ in $\Da$, a 2-cell $\tinygraph{Virtual_Prof2}$ in $\Ea$.
\end{itemize}
This data satisfies the profunctor laws:
$$\fullgraph{Virtual_ProfId} = \fullgraph{hYR}, \quad\quad\quad\quad\fullgraph{Virtual_ProfAct1} = \fullgraph{Virtual_ProfAct2}, \quad\mbox{and for $\tinygraph{Virtual_Prof2},$}$$
$$\fullgraph{Virtual_ProfExch1} = \fullgraph{Virtual_ProfExch2}.$$

Finally, in order to show that enriched categories may be arranged into a virtual equipment, we need to describe a 2-cell. Let $\Ca_0$, $\ldots$, $\Ca_k$, $\Da_0$ and $\Da_1$ be enriched categories, let $F : \Ca_0 \to \Da_0$ and $G : \Ca_k \to \Da_1$ be functors, and let $J_i : \Ca_{i-1} \topro \Ca_i$ and $K : \Da_0 \topro \Da_1$ be profunctors. A morphism $\alpha$ with this signature is a collection of  2-cells $\alpha_{\vec{c}} : J_1(c_0, c_1)\cdots J_k(c_{k-1}, c_k) \xrightarrow[G]{F} K(Fc_0, Gc_k)$ in $\Ea$, which may be written as $\ingraph{Virtual_Morph}$. This family satisfies two families of laws:
$$(1)\quad \fullgraph{Virtual_MorphAct1} = \fullgraph{Virtual_MorphAct2},\quad\mbox{and} \quad (2)\quad \fullgraph{Virtual_MorphLaw1} = \fullgraph{Virtual_MorphLaw2}.$$
These laws say that the action of the categories $\Ca_i$ on the profunctors $J_i$ are equalized by $\alpha$ (if $1 < i < k$), or that the action commutes with the action of the functors $F$ and $G$. The identity morphism of a profunctor has identities for all of its components.

Composition of profunctors is given by composing their components in $\Ea$. The associativity and identity laws of a virtual equipment then follow from the same laws in $\Ea$.

Likewise, the restrictions in $\VCat{\Ea}$ are given by taking the restrictions component-wise. In other words, we define the component of $K(G, F)$ at $\tinygraph{oY}$ and $\tinygraph{oR}$ as $\ingraph{Virtual_ProfRestr}$. We define the action with the cell $\ingraph{Virtual_ProfRestrAct}$ using the components of the functor $F$ and the action on $K$. The other action is defined similarly; we will only work with the top action. That this is indeed a profunctor follows from the functor laws of $F$ and profunctor laws of $K$:
$$\fullgraph{Virtual_ProfRestrActId} = \fullgraph{Virtual_ProfRestrActId2} = \fullgraph{Virtual_ProfRestr},$$
$$\fullgraph{Virtual_ProfRestrActComp} = \fullgraph{Virtual_ProfRestrActComp2} = \fullgraph{Virtual_ProfRestrActComp3} = \fullgraph{Virtual_ProfRestrActComp4}.$$

The cartesian cell of $K(G, F)$ is given component-wise by $\ingraph{Virtual_ProfRestrCartCell}$. That this is a profunctor morphism follows quickly from the definition of the action and the kink identities. $K(G, F)$ as defined satisfies the required universal property because its components do.

\section{The Canonical Embedding}\label{sec:embedding}
 Let's begin to describe the functor $| \cdot | : \Ea \to \VCat{\Ea}$ embedding a virtual equipment into the virtual equipment of categories enriched in it, which I'll call the `canonical embedding'. The construction has the flair of a Yoneda embedding, which I hope to explore further in subsequent work. Though we've taken the codomain of this functor to be the virtual equipment of categories enriched in $\Ea$ and all functors and profunctors between them, the realization functor will land in the part of $\VCat{\Ea}$ that uses only purely horizontal cells of $\Ea$ -- functors won't change the extent of the objects they act on.

 For an object $\ingraph{oY}$ of $\Ea$, we define its representation (its image under $| \cdot |$) to be the $\Ea$-enriched category with
\begin{itemize}
    \item Objects vertical arrows $\ingraph{vRY}$, with each object's extent being its domain.
    \item Between objects $\ingraph{vBY}$ and $\ingraph{vRY}$, a hom-object $\ingraph{Embed_Hom_BYR}$ (in $\Ea$).
    \item For object $\ingraph{vRY}$, an identity arrow \ingraph{Embed_Id_RY} (a 2-cell in $\Ea$).
    \item For each composable triple, a composition arrow \ingraph{Embed_Comp} (a 2-cell in $\Ea$).
\end{itemize}

We can verify that the identity and associativity conditions hold graphically.
$$\fullgraph{Embed_Id_Bottom} = \fullgraph{Embed_Id_Id} = \fullgraph{Embed_Id_Top}.$$
$$\fullgraph{Embed_Assoc_Top} = \fullgraph{Embed_Assoc_Bottom}.$$
So this construction does indeed yield a category $\realize{\nanograph{oY}}$ enriched in $\Ea$. It remains to realize vertical arrows as functors, proarrows as profunctors, and 2-cells as morphisms of profunctors. We begin by recalling the definition of a functor between enriched categories in the case we are concerned with. The functor we define here have only trivial action on the extents of objects.

\begin{definition}\label{Def_Functor_Embed}
A functor $f : \realize{\nanograph{oY}} \to \realize{\nanograph{oR}}$ which does not change extent consists in
\begin{enumerate}
    \item For each element $x = \ingraph{vBY}$ of $\realize{\nanograph{oY}}$, an element $f(x) = \ingraph{Embed_Full_Arrow_fBR}$ of $\realize{\nanograph{oR}}$ with the same domain.
    \item For each pair of elements $\ingraph{vBY}$ and $\ingraph{vGY}$, a 2-cell $\ingraph{Embed_Full_Arrow_fAction}$, such that the following functor laws hold:
    $$\fullgraph{Embed_Full_Arrow_id1} = \fullgraph{Embed_Full_Arrow_id2}$$
    $$\fullgraph{Embed_Full_Arrow_comp1} = \fullgraph{Embed_Full_Arrow_comp2}$$
\end{enumerate}
\end{definition}

For an arrow $\ingraph{vYG}$, we get an enriched functor sending objects $\ingraph{vRY}$ to $\ingraph{vCompRYG}$ and acting on homs by $\ingraph{Embed_Functor}$. We can check that this satisfies the functor conditions graphically.

$$\fullgraph{Embed_Functor_Id_1} = \fullgraph{Embed_Functor_Id_2}$$
$$\fullgraph{Embed_Functoriality_2} = \fullgraph{Embed_Functoriality_1}$$

Though it is a special case of the following discussion of the representation of a general 2-cell, we will take a moment to discuss the representation of a vertical 2-cell as a natural transformation. For a 2-cell $\ingraph{v2CellYG}$, and for each object $\ingraph{vRY}$ of $\ingraph{oY}$, we get a component $\ingraph{Embed_Transfor}$ of a natural transformation. This satisfies the laws for a natural transformation, as we can see graphically.

$$\fullgraph{Embed_Transfor_Law1} = \fullgraph{Embed_Transfor_Law2}$$

For purposes of explicit calculation, its useful to note that the above two morphisms are equal to $\ingraph{Embed_Transfor3}$. This completes the construction of the functor $| \cdot | : v\Ea \rightarrow \VCat{\Ea}$ from the vertical 2-category $v\Ea$ of $\Ea$. Now we can turn our attention to proarrows.

For each proarrow $\ingraph{hYR}$ in $\Ea$, we get the enriched profunctor sending $\tinygraph{vPY}$ and $\tinygraph{vGR}$ to $\ingraph{Embed_Prof_BYRG}$. The category induced by $\tinygraph{oY}$ acts on the representative of $\tinygraph{hYR}$ via the map $\ingraph{Embed_Prof_Comp}$, subject to the following rules that may be verified graphically:
\begin{align*}
    \fullgraph{Embed_Prof_Id} &= \fullgraph{Embed_Prof_BYRG} \\
    \fullgraph{Embed_Prof_CompLaw1} &= \fullgraph{Embed_Prof_CompLaw2}.
\end{align*}

The representative of $\tinygraph{oR}$ acts similarly by the map $\ingraph{Embed_Prof_Comp2}$, subject to analogous laws as those above. The two actions interact according to the following rule, which holds graphically:
$$\fullgraph{Embed_Prof_CompEx1} = \fullgraph{Embed_Prof_CompEx2}.$$

Finally, we come to the representation of a 2-cell. We begin by recalling the definition of a morphism of enriched profunctors in the relevant case.

\begin{definition}\label{Def_ProMorph_Embed}
A morphism of profunctors $\alpha : |\tinygraph{hYP}| \cdots |\tinygraph{hWR}| \xrightarrow[|\tinygraph{vRRd}|]{|\tinygraph{vYYd}|} |\tinygraph{hYdRd}|$ is a family of $2$-cells $\alpha_{\vec{a}}$ for each list $\vec{a} = \tinygraph{vBY},\,\tinygraph{vB2P},\ldots,\tinygraph{vQdW},\,\tinygraph{vGR}$ in $\Ea$ of the following form:
$$\fullgraph{VirtualProMorph}.$$

This family follows the following two equations
$$(1)\quad \fullgraph{VirtualProMorphLaw2a} = \fullgraph{VirtualProMorphLaw2b} \quad \text{and} \quad (2) \quad \fullgraph{VirtualProMorphLaw1a} = \fullgraph{VirtualProMorphLaw1b}$$

\end{definition}

The representative of a two cell $\ingraph{Virtual2Celldots}$ is a map of profunctors which assigns to each list $\tinygraph{vBY},\,\tinygraph{vB2P},\ldots,\tinygraph{vQdW},\,\tinygraph{vGR}$ of objects the 2-cell
$$\fullgraph{VirtualProMorphRep}.$$
This can be quickly seen to satisfy the necessary equations
$$(1)\quad \fullgraph{VirtualProMorphRepLaw1a} = \fullgraph{VirtualProMorphRepLaw1b} \quad \text{and} \quad (2) \quad \fullgraph{VirtualProMorphRepLaw2a} = \fullgraph{VirtualProMorphRepLaw2b}.$$

From the pictures, it is clear that the representative of the various identity 2-cells are their respective identities: simply delete the lines you don't need! 

\begin{remark}
If our virtual equipment is a virtual equipment of categories enriched in a monoidal equipment, and we restrict the domains of the objects of $|\nanograph{oY}|$ to the walking object (one object category whose hom is the monoidal unit), then the above construction will recover $\tinygraph{oY}$. In other words, if our objects are already enriched categories, then we can recover them from their representatives.
\end{remark}

\section{Proving Functoriality}\label{sec:functoriality}

In this section, we prove the functoriality of the canonical embedding constructed above, and show that it preserves composites of proarrows. 

We will just prove functoriality for 2-cells with two input proarrows, since these contain all elements that appear in 2-cells with arbitrarily many inputs. The proof for general 2-cells is the same, but with added ellipses.

Consider the composite $\ingraph{Virtual_CompCellRepComposed}$ of the 2-cell $\ingraph{Virtual_CompCellRep1}$ with $\ingraph{Virtual_CompCellRep2}$ and $\ingraph{Virtual_CompCellRep3}$. At $\tinygraph{vXG}$, $\tinygraph{vXP}$, $\tinygraph{vXC}$, $\tinygraph{vXW}$ and $\tinygraph{vXQ}$ (domains left unspecified), the component of the composite of the representatives is as on the left, and the component of the representative of the composite is as on the right:
$$\fullgraph{Virtual_CompCellRepLaw2} =  \fullgraph{Virtual_CompCellRepLaw1}.$$
Since composites of profunctor morphisms are taken componentwise, this shows that the composite of the representatives is the representative of the composite of 2-cells.

Now we turn to composites of proarrows. These are not necessarily preserved by functors between virtual equipments because they are determined by a universal property. We will show that the representative of the witness of a composite in $\Ea$ is itself a witness to a composite in $\VCat{\Ea}$.

\begin{proposition}
The representative (image under the canonical embedding $|\cdot| : \Ea \to \VCat{\Ea}$) of a witness to a composite in $\Ea$ is itself the witness of a composite in $\VCat{\Ea}$.
\end{proposition}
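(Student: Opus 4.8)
The plan is to verify directly the universal property of a composite from the second definition of Section~\ref{sec:virtual.equipments}. Write $\mu$ for the structure $2$-cell witnessing $J$ as the composite of $J_1,\dots,J_k$ in $\Ea$, so that $|\mu|$ is a morphism $|J_1|\cdots|J_k| \Rightarrow |J|$ of profunctors in $\VCat{\Ea}$; we must show that every profunctor morphism
$$\alpha\colon L \cdot |J_1|\cdots|J_k| \cdot R \;\Longrightarrow\; M$$
with $L$, $R$ arbitrary lists of profunctors and $M$ an arbitrary profunctor of $\VCat{\Ea}$ factors uniquely through $|\mu|$. Since composition and identities of profunctor morphisms are computed componentwise, it suffices to produce, for each object list $\vec{c}$, a $2$-cell $\beta_{\vec c}$ in $\Ea$, and then to check that the family $\{\beta_{\vec c}\}$ is a profunctor morphism whose composite with $|\mu|$ recovers $\alpha$.

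First I would reduce each component to the situation governed by the universal property of $\mu$ in $\Ea$. Fix $\vec c$. The component $\alpha_{\vec c}$ is a $2$-cell of $\Ea$ whose horizontal domain contains, in its middle, the proarrows $|J_i|(c_{i-1},c_i)$; by construction these are the restrictions of $J_i$ along the connecting objects $c_{i-1},c_i$, which are themselves vertical arrows of the representing categories. Using the kink identities together with the Section~\ref{sec:virtual.equipments} lemma exhibiting a restriction as the composite of its proarrow with the companions and conjoints of the connecting arrows, I would rewrite $\alpha_{\vec c}$ as a $2$-cell whose middle is literally $J_1,\dots,J_k$, the companion/conjoint bends of the $c_i$ having been absorbed into the (otherwise arbitrary) side proarrows. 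This is exactly a test $2$-cell for $\mu$, so the universal property of $\mu$ factors it uniquely through $\mu$; re-folding the outer bends through the cartesian cell defining $|J|(c_0,c_k)$ then yields the desired $\beta_{\vec c}$.

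It remains to check that $\{\beta_{\vec c}\}$ is a genuine profunctor morphism, i.e.\ that it satisfies the two equivariance laws for morphisms in $\VCat{\Ea}$ (Section~\ref{sec:enriched.cats}), and that the factorization is unique. Uniqueness is immediate: a second factorization would give, after the same unfolding, a second factorization of $\alpha_{\vec c}$ through $\mu$, contradicting the uniqueness clause of $\mu$'s universal property componentwise. For the equivariance laws I would compose both sides of each law with $|\mu|$; the two composites then coincide because $\alpha$ itself satisfies the corresponding law, and the uniqueness part of $\mu$'s universal property (again applied componentwise) upgrades this to equality of the two sides. This is the step I expect to be the main obstacle: the profunctors $L,R,M$ are completely arbitrary --- not in the image of $|\cdot|$ --- so there is no functoriality to invoke, and the compatibility of the componentwise factorizations must be forced entirely by the uniqueness in $\mu$'s universal property. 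The accompanying bookkeeping of the companion/conjoint bends at the internal connecting objects $c_1,\dots,c_{k-1}$ in the reduction step is delicate but routine, handled graphically by the same kink manipulations used throughout the construction.
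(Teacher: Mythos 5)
Your overall architecture matches the paper's: factor each component of a test morphism through the witness in $\Ea$ using its universal property there, then use the uniqueness clause of that universal property to show the resulting family satisfies the profunctor-morphism laws and that the factorization is unique. Those last two steps are exactly how the paper argues, and they are fine.

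The genuine gap is in your reduction step. For a general component $\alpha_{\vec c}$, the middle of the horizontal domain unfolds to
$\widehat{c_0},\,J_1,\,\widecheck{c_1},\,\widehat{c_1},\,J_2,\,\widecheck{c_2},\,\widehat{c_2},\,\dots,\,J_k,\,\widecheck{c_k}$,
where $\widehat{(-)}$ and $\widecheck{(-)}$ denote companions and conjoints. Only the outermost bends $\widehat{c_0}$ and $\widecheck{c_k}$ can be ``absorbed into the side proarrows''; the pairs $\widecheck{c_i}\,\widehat{c_i}$ at the internal connecting objects sit \emph{between} $J_i$ and $J_{i+1}$, and no kink identity removes them --- there is a counit $2$-cell from $\widecheck{c_i}\,\widehat{c_i}$ to the unit, but not an isomorphism, and it points the wrong way to turn $\alpha_{\vec c}$ into a test cell for $\mu$. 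So the claim that $\alpha_{\vec c}$ can be rewritten with $J_1,\dots,J_k$ literally adjacent is false for arbitrary internal $c_1,\dots,c_{k-1}$, and this is precisely the case the paper must (and does) handle, since already with two inputs there is one internal object. The correct move is to define the factor using the components of $\alpha$ at the \emph{identity} elements of the intermediate representatives (where the internal bends are genuinely trivial, since restriction along identities is the identity), and then to recover $\alpha$ at arbitrary internal objects from these via the first profunctor-morphism law --- the one asserting that $\alpha$ equalizes the two actions of each intermediate category $|A_i|$ on $|J_i|$ and $|J_{i+1}|$. That equivariance law, which you never invoke in the reduction, is the actual engine of the componentwise factorization; ``the same kink manipulations used throughout the construction'' will not substitute for it.
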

\begin{proof}
Again, we suppose that the composite is formed of two proarrows; the general case is analogous. Suppose that $\ingraph{Embed_Prof_CompWitness}$ is the witness of a composite proarrow. Given any morphism of profunctors with the representatives of $\tinygraph{hYB}$ and $\tinygraph{hBR}$ as inputs (possibly among others), we may factor it component-wise in $\Ea$ through the composite witness as follows\footnote{We draw a single proarrow flanking the two representatives on each side. The general case is analogous.}:
$$\fullgraph{Embed_Prof_CompWitnessLaw1} =\fullgraph{Embed_Prof_CompWitnessLaw2}=
\fullgraph{Embed_Prof_CompWitnessLaw3} = \fullgraph{Embed_Prof_CompWitnessLaw4}$$

From the diagram on the right, it can be seen that we are factoring through the representative of $\ingraph{Embed_Prof_CompWitness}$. 

It remains to show that the other 2-cell in the diagram (guaranteed by the universal property of $\tinygraph{Embed_Prof_CompWitness}$) forms a component of a profunctor morphism. We do this by factoring both sides of the defining equalities of the original profunctor morphism through $\tinygraph{Embed_Prof_CompWitness}$, and then using the uniqueness of its universal property.
\begin{enumerate}
    \item $$\fullgraph{Embed_Prof_CompFactorLaw1d} =
    \fullgraph{Embed_Prof_CompFactorLaw1a} =
    \fullgraph{Embed_Prof_CompFactorLaw1b} =
    \fullgraph{Embed_Prof_CompFactorLaw1c}$$
    \item $$\fullgraph{Embed_Prof_CompFactorLaw2d} =
    \fullgraph{Embed_Prof_CompFactorLaw2a} =
    \fullgraph{Embed_Prof_CompFactorLaw2b} =
    \fullgraph{Embed_Prof_CompFactorLaw2c} =
    \fullgraph{Embed_Prof_CompFactorLaw2e} $$
\end{enumerate}
\end{proof}

\section{The Image of the Canonical Embedding}\label{sec:properties}
In this section, we briefly investigate the image of the canonical embedding. We will see that $|\cdot|$ is full and faithful 2-cells, full on arrows, and ``nearly full'' on proarrows in the sense that the action on proarrows is coreflective. This justifies are calling $|\cdot|$ an `embedding'. All the proofs involve taking the components at the element of $|A|$ represented by the identity at $A$, which gives the embedding the smell of a Yoneda embedding. This also means that they would need to be significantly adapted to deal with restrictions of the domains of elements of the representative of $A$, say in the case that we restrict to elements whose domain is a monoidal identity for $\Ea$.

We can easily show that the canonical embedding is full and faithful on $2$-cells. This in particular means that it is faithful, up to isomorphism, on vertical and horizontal arrows. For a refresher on profunctor morphisms between representatives, see Definition \ref{Def_ProMorph_Embed}.
\begin{proposition}
The canonical embedding $| \cdot | : \Ea \to \VCat{\Ea}$ is full and faithful on $2$-cells.
\end{proposition}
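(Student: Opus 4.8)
The plan is to exploit the ``Yoneda point'' of each representative: among the objects of $|\tinygraph{oY}|$, which are the vertical arrows into $\tinygraph{oY}$, sits the identity arrow $\id$ on $\tinygraph{oY}$, whose companion and conjoint are units and hence invisible by the kink identities of Section \ref{sec:virtual.equipments}. Evaluating any profunctor morphism between representatives at the tuple $\vec{\id}$ of these identity objects will both recover a $2$-cell of $\Ea$ and, conversely, pin down the whole morphism. This is the single mechanism behind both halves of the statement.

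First I would dispatch faithfulness. Given a $2$-cell $\beta$ of $\Ea$ with horizontal domain $J_1 \cdots J_k$, horizontal codomain $K$, and vertical sides $f,g$, I evaluate its representative $|\beta|$ at the list $\vec{\id}$ of identity objects. Because the companion and conjoint of an identity are the corresponding (undrawn) units, each hom-value $|J_i|(\id,\id)$ is simply $J_i$ and $|K|(\id,\id)$ is simply $K$, while the vertical actions of $|f|,|g|$ reduce to $f,g$; the bent diagram $\fullgraph{VirtualProMorphRep}$ defining the representative then collapses, via the kink identities, to $\beta$ itself. Hence $|\beta|_{\vec{\id}} = \beta$, so that $|\beta| = |\beta'|$ forces $\beta = \beta'$, and the embedding is faithful on $2$-cells.

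For fullness, given a profunctor morphism $\phi : |J_1| \cdots |J_k| \xrightarrow[|g|]{|f|} |K|$ in the sense of Definition \ref{Def_ProMorph_Embed}, I set $\beta := \phi_{\vec{\id}}$. By the identification above of $|J_i|(\id,\id)$ with $J_i$ and $|K|(\id,\id)$ with $K$, this component is a $2$-cell of the correct signature $J_1 \cdots J_k \xrightarrow[g]{f} K$ in $\Ea$. It then remains to show $|\beta| = \phi$, i.e.\ that $\phi_{\vec{a}} = |\beta|_{\vec{a}}$ for every list $\vec{a} = (a_0,\ldots,a_k)$ of objects. The point is that an arbitrary component is obtained from the identity component by the profunctor actions: each object $a_i$, being a vertical arrow, is introduced by acting with the canonical hom-element built from its companion and conjoint bends, and the two profunctor-morphism laws of Definition \ref{Def_ProMorph_Embed} let me slide these actions across $\phi$. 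Carrying the bends through $\phi_{\vec{\id}} = \beta$ and then simplifying the resulting (co)units with the kink identities reproduces precisely the bent diagram that defines $|\beta|_{\vec{a}}$.

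The main obstacle is exactly this reconstruction step in the fullness argument: one must identify which element of each hom-object realizes the passage from the identity object $\id$ to a general object $a_i$, apply the action laws to extract it from $\phi_{\vec{a}}$, and verify that collapsing the resulting (co)units by the kink identities lands on the representative formula and not merely something isomorphic to it. Everything else --- the matching of signatures at $\vec{\id}$ and the collapse of the representative diagram at identity objects --- is a routine consequence of the kink identities together with the convention that units are suppressed in the notation.
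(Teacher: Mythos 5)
Your proposal is correct and follows essentially the same route as the paper: evaluate at the tuple of identity objects to recover the $2$-cell (faithfulness), and for fullness take $\phi_{\vec{\id}}$, bend it to the right signature, and use the profunctor-morphism action laws together with the kink identities to show component-wise that its representative agrees with $\phi$. The only gloss is that the identity component's codomain is really the restriction $K(f,g)$ rather than $K$ itself, which is exactly why the bending step you describe is needed --- the paper handles this the same way.
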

\begin{proof}
We will work with $2$-cells with two proarrows in the domain, since the general case is the same. 

Let's begin with fullness. Suppose we have a profunctor morphism $\alpha : |\nanograph{hYB}|,\,|\nanograph{hBR}| \xrightarrow[|\nanograph{vRRd}]{|\nanograph{vYYd}} |\nanograph{hYdRd}|$. Taking the component of $\alpha$ at the identities of $\tinygraph{oY}$, $\tinygraph{oB}$, and $\tinygraph{oR}$ respectively, we get a 2-cell in $\Ea$ which we may bend into $\ingraph{Virtual_CompCellRep1}$. It remains to show that $\alpha = |\tinygraph{Virtual_CompCellRep1}|$, which we can by arguing component-wise:
$$\fullgraph{Embed_Prof_Full0} = \fullgraph{Embed_Prof_Full1} = \fullgraph{Embed_Prof_Full2} = \fullgraph{Embed_Prof_Full3}.$$

Faithfulness follows quickly. If two 2-cells have the same representation, then those representations are equal component-wise and so in particular at the identity components. But these 2-cells at the identity components of the representations are the 2-cells being represented, so the two 2-cells were equal to begin with.
\end{proof}

The canonical embedding is also full on arrows. 
\begin{proposition}
The canonical embedding $| \cdot | : \Ea \to \VCat{\Ea}$ is full on arrows.
\end{proposition}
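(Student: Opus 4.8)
The plan is to follow the template of the preceding proposition on $2$-cells: recover a candidate preimage by evaluating at the identity object, and then verify that the two functors agree by a component-wise computation. So, given an extent-preserving functor $f : |A| \to |B|$ as in Definition \ref{Def_Functor_Embed}, I first set $\bar f := f(\id_A)$. The identity $\id_A : A \to A$ is an object of $|A|$ whose extent is $A$; since $f$ does not change extent, $f(\id_A)$ is an object of $|B|$ with extent $A$, that is, a vertical arrow $\bar f : A \to B$ in $\Ea$. It then suffices to prove $f = |\bar f|$, i.e. that $f$ and $|\bar f|$ agree both on objects and on their actions on hom-objects.

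The engine of the verification is that the identity object $\id_A$ is ``universal''. For any object $x : X \to A$ of $|A|$, the hom-object $|A|(\id_A, x)$ is precisely the conjoint of $x$ and $|A|(x, \id_A)$ the companion of $x$ (Corollary \ref{Lem:CompCompConj}), each realized as a restriction of the unit $h_A$ and hence carrying a cartesian cell. Applying $f$'s hom-action to these and comparing against the cartesian cells that exhibit $|B|(\bar f, f(x))$ and $|B|(f(x), \bar f)$ as restrictions of $h_B$, the universal property of restriction pins down the arrow $f(x)$: together with the kink identities and Lemma \ref{Lem:CompCompComp}, it must be the unique arrow through which the image factors, namely $\bar f \circ x$. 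Once the object assignment $f(x) = \bar f \circ x$ is in hand, I identify the hom-action. Every hom $|A|(b, g)$ is the composite of the companion $|A|(b, \id_A)$ with the conjoint $|A|(\id_A, g)$, and the composition $2$-cell of $|A|$ routing through $\id_A$ is trivial, since the intermediate $h_A$ cancels by the kink identities. The composition-preservation law of $f$ therefore reduces $f_{b,g}$ to the two boundary actions $f_{b,\id_A}$ and $f_{\id_A, g}$. The identity-preservation law forces $f_{\id_A, \id_A}$ to be the companion--conjoint cap of $\bar f$ (the enriched identity of $\bar f$ in $|B|$), and sliding this cap past $b$ and $g$ via the kink identities shows that $f_{b, \id_A}$ and $f_{\id_A, g}$ — and hence $f_{b,g}$ — coincide with the corresponding actions of $|\bar f|$. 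Assembling these component-wise, exactly as in the $2$-cell argument, gives $f = |\bar f|$.

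The main obstacle is this last identification: rigidifying $f$'s action on bare companions and conjoints using only functoriality and the single value $\bar f = f(\id_A)$. The identity and composition laws constrain the boundary actions only through the universal properties of the companion and conjoint, so the real content is to check that these universal properties leave no slack once $\bar f$ is fixed. I expect this to be a careful but routine chase in the graphical calculus, repeatedly invoking the kink identities to transport the cap of $\bar f$ along $b$ and $g$; in practice the object-level identity $f(x) = \bar f \circ x$ and the hom-level identity are proved in tandem, since it is exactly this object identity that makes the sources and targets of the boundary comparisons agree.
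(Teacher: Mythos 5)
Your overall strategy --- evaluate at the identity object to obtain a candidate $\bar f = f(\id_A)$, then compare $f$ with $|\bar f|$ using the fact that the homs into and out of $\id_A$ are the companions and conjoints --- is the same as the paper's, but there are two genuine problems. First, you treat only functors that do not change extent (those of Definition \ref{Def_Functor_Embed}), whereas a general arrow $|A| \to |B|$ in $\VCat{\Ea}$ may send an object $x$ to an object of a different extent together with a nontrivial structure arrow between the extents. The paper disposes of this case first, via Lemma \ref{Lem_EmbeddingArrowEqualsFlat}: every functor into a representative is naturally isomorphic to one, $f^{\flat}$, whose structure maps on objects are identities. Without this reduction your argument establishes only Lemma \ref{Lem_EmbeddingFullNoExtent}, not the proposition.

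Second, and more seriously, the step ``the universal property of restriction pins down the arrow $f(x)$ \ldots it must be the unique arrow through which the image factors, namely $\bar f \circ x$'' is not valid, and the intended conclusion $f = |\bar f|$ is false in general. The object assignment of $f$ is free data, not determined by any universal property: one can replace $f(x)$ by any isomorphic object of $|B|$ and conjugate the hom-actions by that isomorphism, obtaining a functor satisfying all the laws with $f(x) \neq \bar f \circ x$. What the hom-actions $f_{\id_A,x}$ and $f_{x,\id_A}$ (on the conjoint and companion of $x$) actually yield, after bending, are two arrows $\bar f \circ x \to f(x)$ and $f(x) \to \bar f \circ x$ \emph{in the enriched category} $|B|$; the functor laws show these are mutually inverse, and a further calculation shows the resulting isomorphisms are natural in $x$. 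The correct conclusion is therefore a natural isomorphism $f \cong |\bar f|$, which is exactly what the paper proves and which suffices for fullness. Your subsequent reduction of $f_{b,g}$ to the boundary actions is sound in spirit, but it must be recast as the naturality of this isomorphism rather than as a literal equality of hom-actions.
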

\begin{proof}
Combine the upcoming lemmas \ref{Lem_EmbeddingArrowEqualsFlat} and \ref{Lem_EmbeddingFullNoExtent}.
\end{proof}

We will show this in two parts, but begin first with a useful definition. 
\begin{definition}
Let $f : \Ca \to |\nanograph{oR}|$ be an enriched functor sending $x \in \Ca$ with $\Ca(x) = \tinygraph{oY}$ to $f(x) = \ingraph{vYdR}$ with structure map $\tinygraph{vYYd}$ and acting on homs by $\ingraph{Virtual_FuncHomYB}$. Define $f^{\flat} : \Ca \to |\tinygraph{oR}|$ to be the enriched functor given by sending $x$ to $\ingraph{vCompYYdR}$ and acting on homs by $\ingraph{Virtual_FuncHomBend}$. Note that the structure maps of $f^{\flat}$ on objects are identities; we say that it \emph{does not change extent}.
\end{definition}

Now we show that every functor into a representative is isomorphic to one that does not change extent.
\begin{lemma}\label{Lem_EmbeddingArrowEqualsFlat}
Let $f, f^{\flat} : \Ca \to |\tinygraph{oR}|$ be as above. Then the maps $\ingraph{Virtual_FuncHomBendLaw1}$ and $\ingraph{Virtual_FuncHomBendLaw2}$ form the components of a natural isomorphism $f \cong f^{\flat}$.
\end{lemma}
\begin{proof}
Follows quickly from the functor laws.
\end{proof}

For a refresher on the definition of a functor between representatives, see Definition \ref{Def_Functor_Embed}.

\begin{lemma}\label{Lem_EmbeddingFullNoExtent}
The canonical embedding is full on functors which do not change the extent of objects (that is, whose structure maps on objects are identities).
\end{lemma}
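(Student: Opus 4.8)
The plan is to run a Yoneda-style argument: an extent-preserving functor out of the representative $|A|$ should be completely determined by its value at the universal object, namely the identity $\id_A$, exactly as a functor out of a representable presheaf is determined by the image of the universal element. Throughout write $A$ and $B$ for the two objects of $\Ea$ whose representatives are in play, so that we are given an $\Ea$-enriched functor $F : |A| \to |B|$ whose structure maps on objects are identities, and we must produce a vertical arrow $g : A \to B$ in $\Ea$ with $|g| = F$.

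First I would define the candidate arrow by evaluating at the universal object: set $g := F(\id_A)$, where $\id_A$ is the object of $|A|$ given by the identity on $A$. Its extent is $A$; since $F$ does not change extent, $g$ is an object of $|B|$ whose extent is again $A$, i.e. a vertical arrow $g : A \to B$ in $\Ea$. The goal is then to check that $|g|$ and $F$ agree on objects and on hom-actions. Agreement on objects means $F(x) = g\circ x$ for every object $x : X \to A$ of $|A|$, since by the construction in Section \ref{sec:embedding} the functor $|g|$ sends $x$ to $g \circ x$. To obtain this I would use the action of $F$ on the hom $|A|(\id_A, x)$: by the computation of homs in a representative (Corollary \ref{Lem:CompCompConj} together with the description of restrictions as composites of companions and conjoints), $|A|(\id_A, x)$ is the conjoint of $x$, while $|B|(g, F(x))$ is the companion of $g$ composed with the conjoint of $F(x)$. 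Thus $F_{\id_A, x}$ is a $2$-cell between these proarrows in $\Ea$, and whiskering it with the canonical conjoint cell of $x$ and applying the kink identities turns it into a factorization of vertical arrows, whose universal property (the bijection between an arrow and the bends of its conjoint) forces the equation $F(x) = g\circ x$; the functor identity law of $F$ is what pins the relevant cell down to the structural one.

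Next I would check agreement on the hom-actions, $F_{x,y} = |g|_{x,y}$, by passing to components at the identity exactly as in the given proof that $|\cdot|$ is full and faithful on $2$-cells. Using the composition law of $F$ together with the object agreement already established, I would rewrite $F_{x,y}$ as a whiskering of the identity-component data of $F$ by the companion of $x$ and the conjoint of $y$; by the identity law this identity-component is the structural cell, and the resulting whiskering is precisely the definition of $|g|$ acting on homs (cf. the map $\ingraph{Embed_Functor}$ in Section \ref{sec:embedding}). This step is a diagram chase in the graphical calculus of the same flavour as the other fullness arguments, using Lemma \ref{Lem:CompCompComp} to manipulate the composites of companions and conjoints.

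The main obstacle is the object-agreement step. Unlike the statements about $2$-cells, here I must extract an equation of \emph{vertical arrows} in $\Ea$ from data that is a priori only a family of $2$-cells (the hom-action of $F$), and the device that makes this possible is the companion/conjoint correspondence: the kink identities let one recover an arrow from the bends of its conjoint, so the cell $F_{\id_A, x}$ can be decoded into the arrow equation $F(x) = g\circ x$. Getting the orientations right and identifying the correct canonical cell in this decoding is the delicate part; once it is in place, everything else reduces to routine graphical manipulation, and combined with Lemma \ref{Lem_EmbeddingArrowEqualsFlat} this yields fullness on all arrows.
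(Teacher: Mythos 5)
Your overall strategy---evaluate the functor at the universal object $\id_A$, set $g := F(\id_A)$, and then decode the hom-action components $F_{\id_A,x}$ and $F_{x,\id_A}$ via companion/conjoint bends and the functor laws---is exactly the paper's strategy. But there is a genuine gap at the step you yourself flag as delicate: you claim that the universal property of the conjoint ``forces the equation $F(x) = g\circ x$'' as an equality of vertical arrows in $\Ea$. It does not, and cannot. The only data relating $F(x)$ to $g\circ x$ is the family of $2$-cells $F_{\id_A,x} : |A|(\id_A,x) \to |B|(g,F(x))$ and $F_{x,\id_A} : |A|(x,\id_A) \to |B|(F(x),g)$. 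Bending these with the kink identities and using the cartesian property of the restriction $|B|(g,F(x))$ produces a \emph{vertical $2$-cell} $g\circ x \to F(x)$ (and one in the other direction), not an identification of the two arrows; the functor identity and composition laws then show these two cells are mutually inverse. This is precisely what the paper does: it constructs an isomorphism $F(x)\cong g\circ x$ for each $x$, checks naturality using the composition law of $F$, and concludes only that $|F(\id_A)| \cong F$ as a \emph{natural isomorphism}, not that $|g| = F$ on the nose. (A sanity check: in the equipment of categories and profunctors, an extent-preserving functor $|A|\to|B|$ sends each $x : X\to A$ to some functor $X \to B$ that need not literally factor through $g$.)

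This gap propagates to your second step: if $F(x)$ and $g\circ x$ are not equal, the equation $F_{x,y} = |g|_{x,y}$ does not even typecheck, since the two sides are $2$-cells with different codomain proarrows. What replaces it is the naturality of the isomorphism $F(x)\cong g\circ x$ in $x$, which is the content of the paper's final calculation. So the fix is to weaken your target from equality to natural isomorphism throughout; since ``full on arrows'' in this paper is read up to isomorphism (Lemma~\ref{Lem_EmbeddingArrowEqualsFlat} is likewise only an isomorphism $f\cong f^\flat$), this weakened statement is both what is true and what is needed. With that adjustment your argument becomes the paper's argument.
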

\begin{proof}
Let $f : \realize{\nanograph{oY}} \to \realize{\nanograph{oR}}$ be such a functor, and let $\ingraph{vYR}$ be $f(\id_{\nanograph{oY}})$. We will construct a natural isomorphism $|f(\id_{\nanograph{oY}})| \cong f$. This involves first constructing, for any $x = \ingraph{vBY}$, an isomorphism $f(x) = \ingraph{Embed_Full_Arrow_fBR} \cong \ingraph{vCompBYR}$, and then proving naturality. First, we note that $\ingraph{Embed_Full_IdAction} = \ingraph{Embed_Full_IdAction2} = \ingraph{Embed_Full_IdAction3}$ by the identity functor law.

The data of $f$ gives us two 2-cells $\ingraph{Embed_Full_xid}$ and $\ingraph{Embed_Full_idx}$. By bending, we get 2-cells $\ingraph{Embed_Full_x}$ and $\ingraph{Embed_Full_xinv}$, which we can show are inverse using the functor laws as follows.
$$\fullgraph{Embed_Full_Iso1} = \fullgraph{Embed_Full_Iso2} = \fullgraph{Embed_Full_Iso3} = \fullgraph{Embed_Full_Arrow_fBR}$$
$$\fullgraph{Embed_Full_IsoInv1} = \fullgraph{Embed_Full_IsoInv2} = \fullgraph{vCompBYR}$$

It remains to show that this isomorphism is natural. We will show that $\ingraph{Embed_Full_xid}$ is natural through a quick calculation; the naturality of $\ingraph{Embed_Full_idx}$ follows similarly.
$$\fullgraph{Embed_Full_Nat1} = \fullgraph{Embed_Full_Nat2} = \fullgraph{Embed_Full_Nat3}$$

\end{proof}

We now characterize the realization of proarrows. While $|\cdot|$ is not full on proarrows, it is nearly full in the following precise way.

\begin{proposition}\label{Prop_ProFull}
The realization functor exhibits $h\Ea(A, B)$ as a coreflective subcategory of $h(\VCat{\Ea})(|A|, |B|)$, with coreflector given by $J \mapsto \overline{J} := J(\id_A, \id_B)$.
\end{proposition}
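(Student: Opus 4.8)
The plan is to exhibit the realization functor on hom-categories, $|\cdot| : h\Ea(A,B) \to h(\VCat{\Ea})(|A|,|B|)$, as a fully faithful left adjoint to $\overline{\cdot}$. First I would record that $\overline{\cdot}$ is a functor: a morphism of profunctors $J \to J'$ over the identity functors on $|A|$ and $|B|$ is a family of $2$-cells indexed by pairs of objects of $|A|$ and $|B|$, and $J \mapsto \overline{J} = J(\id_A, \id_B)$ simply selects the component at the distinguished objects $\id_A \in |A|$ and $\id_B \in |B|$ (the identity arrows, whose extents are $A$ and $B$); functoriality is immediate, since composition of profunctor morphisms is componentwise. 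I would also observe, as an immediate consequence of the preceding proposition (full and faithful on $2$-cells), that $|\cdot|$ is fully faithful between these hom-categories, since a morphism $|K| \to |K'|$ over identity functors is exactly the representative of a $2$-cell $K \to K'$ in $\Ea$.

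The core of the argument is a natural bijection
$$h(\VCat{\Ea})(|A|,|B|)(|K|, J) \;\cong\; h\Ea(A,B)(K, \overline{J}),$$
witnessing $|\cdot| \dashv \overline{\cdot}$. The forward map sends a profunctor morphism $\phi : |K| \to J$ to its component $\phi_{\id_A, \id_B}$; this is a $2$-cell $K \to \overline{J}$ in $\Ea$ because the component $|K|(\id_A, \id_B)$ is the restriction of $K$ along the identities, hence $K$ itself (its flanking companion and conjoint are units), while $J(\id_A, \id_B) = \overline{J}$ by definition. For the inverse, given $\psi : K \to \overline{J}$ I would extend it to a full family: the component of $|K|$ at objects $x$ of $|A|$ and $y$ of $|B|$ is, by construction, the composite of $K$ with the companion of $x$ on top and the conjoint of $y$ on bottom, so I apply $\psi$ to the middle strand and then use the two profunctor actions of $J$ to absorb the companion and conjoint into $J$, landing in $J(x,y)$.

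Verifying genuine adjointness breaks into three checks, which I would carry out graphically. First, that the extended family $\widehat{\psi}$ satisfies the two profunctor-morphism laws of Definition \ref{Def_ProMorph_Embed}; this follows from the profunctor laws (associativity and exchange of the two actions) of $J$ together with the kink identities for the companion and conjoint. Second, that the two assignments are mutually inverse: that $\phi_{\id_A,\id_B}$ re-extends to $\phi$ is the crux, and it is a Yoneda-style step — the morphism law (1) for $\phi$ says exactly that $\phi$ commutes with the actions of $|A|$ and $|B|$, and since those actions on $|K|$ are given by composing with companions and conjoints, every component $\phi_{x,y}$ is forced to be the action-transport of $\phi_{\id_A,\id_B}$, i.e.\ $\widehat{\phi_{\id_A,\id_B}}$; the reverse composite $\widehat{\psi} \mapsto (\widehat{\psi})_{\id_A,\id_B} = \psi$ is immediate from the identity profunctor law. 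Third, naturality in $K$ and $J$, which is routine. I expect the second check — recovering $\phi$ from its value at the identities — to be the main obstacle, as it is where the full force of the profunctor-morphism axioms is needed; it also yields the unit $K \to \overline{|K|} = K(\id_A,\id_B) \cong K$ as an isomorphism, confirming that $|\cdot|$ is fully faithful and hence that $h\Ea(A,B)$ sits inside $h(\VCat{\Ea})(|A|,|B|)$ as a coreflective subcategory.
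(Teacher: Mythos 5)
Your proposal is correct and follows essentially the same route as the paper: the coreflector is the component at the identities, the counit/inverse bijection is built by extending along the actions of $J$ to absorb the companions and conjoints, and the crux in both cases is that a profunctor morphism out of $|K|$ is determined by its component at $(\id_A,\id_B)$ since the actions on $|K|$ are just composition with companions and conjoints. The only difference is presentational --- you verify the adjunction via the hom-set bijection, while the paper exhibits the unit and counit and checks the zig-zag identities (trivially, since $\overline{|K|}=K$).
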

\begin{proof}
Given a profunctor $J: |\nanograph{oY}| \topro |\nanograph{oR}|$, we can take its component at the identities of $\tinygraph{oY}$ and $\tinygraph{oR}$ to get a proarrow $\ingraph{hYR}$ in $\Ea$. That the actions $\ingraph{Virtual_ProfFullAct}$ form the components of a morphism $|J(\nanograph{oY}, \nanograph{oR})| \to J$ in $\VCat{\Ea}$ follows immediately from the profunctor laws governing $J$. Naturality in $J$ immediately follows from the commutation of the action with the components of a morphism $J \to J'$.

Note that $|K|(\id_{\nanograph{oY}}, \id_{\nanograph{oR}}) = K$ and that the counit $||K|(\id_{\nanograph{oY}}, \id_{\nanograph{oR}})| \to |K|$ is therefore an identity (or unitor). Likewise, the unit $J(\id_{\nanograph{oY}}, \id_{\nanograph{oR}}) \to |J(\id_{\nanograph{oY}}, \id_{\nanograph{oR}})|(\id_{\nanograph{oY}}, \id_{\nanograph{oR}})$ is an identity (or unitor). Therefore, the zig-zag identities hold trivially.
\end{proof}

\begin{corollary}
The essential image of $|\cdot|$ on profunctors between $|A|$ and $|B|$ consists of precisely those profunctors $J : |A| \topro |B|$ for which the action $|J(\id_A, \id_B)| \to J$ is a natural isomorphism.
\end{corollary}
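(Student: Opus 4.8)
The plan is to recognize this as the standard description of the essential image of a fully faithful adjoint and to import the relevant data directly from Proposition~\ref{Prop_ProFull}. That proposition presents $|\cdot|$ as a fully faithful left adjoint to the coreflector $\overline{(-)} : J \mapsto \overline{J} = J(\id_A, \id_B)$, and it identifies the counit $\varepsilon_J$ of the adjunction with the action morphism $|J(\id_A, \id_B)| \to J$ assembled there from the profunctor laws. The corollary is then an instance of the general principle that an object lies in the essential image of a fully faithful left adjoint precisely when the counit at that object is invertible; I would carry out both inclusions explicitly in the present notation.

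For the inclusion ``$\Leftarrow$'', I would simply observe that if $\varepsilon_J : |J(\id_A, \id_B)| \to J$ is a natural isomorphism, then $J$ is isomorphic to $|\overline{J}|$ with $\overline{J} = J(\id_A, \id_B) \in h\Ea(A, B)$, so that $J$ lies in the essential image by definition. For the inclusion ``$\Rightarrow$'', I would take an isomorphism $\phi : J \xrightarrow{\cong} |K|$ with $K \in h\Ea(A, B)$, recall from Proposition~\ref{Prop_ProFull} that $|K|(\id_A, \id_B) = K$ and hence that $\varepsilon_{|K|}$ is an identity (or unitor), and then deduce that $\varepsilon_J$ is invertible from the naturality square of $\varepsilon$ along $\phi$, together with the fact that the coreflector, being a functor, sends $\phi$ to an isomorphism $\overline{\phi}$.

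Since the abstract statement is formal, I do not expect a genuine obstacle; the only place demanding care is the bookkeeping of unitors in the claim $\varepsilon_{|K|} = \id$, which is inherited verbatim from the proof of Proposition~\ref{Prop_ProFull}. Everything else reduces to chasing a single naturality square, so the argument should be short.
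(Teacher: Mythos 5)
Your proposal is correct and matches the paper's approach: the paper simply cites the general fact that an object lies in the essential image of a fully faithful left adjoint (equivalently, of the inclusion of a coreflective subcategory) exactly when the counit at that object is invertible, and your argument is just that general fact unpacked in the present notation, using the identification of the counit with the action morphism from Proposition~\ref{Prop_ProFull}. The naturality-square chase in your ``$\Rightarrow$'' direction is the standard proof of the cited fact and goes through without issue.
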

\begin{proof}
This is a fact about coreflective subcategories in general. See, e.g., Proposition 1.3 of \cite{Gabriel}. 
\end{proof}

\begin{proposition}\label{Prop_ProFullFunc}
If the composite $JK$ of $J : |A| \topro |B|$ and $K : |B| \topro |C|$ exists, then $\overline{JK} \cong \overline{J}\,\overline{K}$.
\end{proposition}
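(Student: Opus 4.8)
The plan is to show that $\overline{JK}$, equipped with a canonical $2$-cell, is itself the composite of $\overline{J}$ and $\overline{K}$ in $\Ea$; since composites are unique up to isomorphism this simultaneously establishes that $\overline{J}\,\overline{K}$ exists and that it agrees with $\overline{JK}$. Let $\eta \colon J, K \to JK$ be the $2$-cell in $\VCat{\Ea}$ witnessing the composite. Because composition of profunctor morphisms in $\VCat{\Ea}$ is computed component-wise, and because the coreflector $\overline{(\cdot)}$ of Proposition~\ref{Prop_ProFull} is exactly ``take the component at the identity objects'', the component $\eta_{\id_A, \id_B, \id_C}$ is a $2$-cell $\overline{J}, \overline{K} \to \overline{JK}$ in $\Ea$. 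I would take this $\bar\eta := \eta_{\id_A,\id_B,\id_C}$ as the candidate witnessing cell and verify the universal property of a composite against it.

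To verify that universal property I would produce, for every proarrow $L$ of $\Ea$ and every pair of flanking lists $\vec W, \vec Q$ of proarrows, a natural bijection between $2$-cells $\vec W, \overline{J}, \overline{K}, \vec Q \to L$ and $2$-cells $\vec W, \overline{JK}, \vec Q \to L$ in $\Ea$, implemented by precomposition with $\bar\eta$. This bijection factors through $\VCat{\Ea}$ in three steps. On the $\overline{JK}$ side, I identify $\Ea$-cells $\vec W, \overline{JK}, \vec Q \to L$ with $\VCat{\Ea}$-morphisms $|\vec W|, JK, |\vec Q| \to |L|$ by passing to and from the component at the identity objects; likewise on the other side I identify $\Ea$-cells $\vec W, \overline{J}, \overline{K}, \vec Q \to L$ with $\VCat{\Ea}$-morphisms $|\vec W|, J, K, |\vec Q| \to |L|$, using that $|L|(\id_A,\id_C) = L$ and $|W|(\id,\id) = W$. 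In between, the hypothesis that $JK$ is the composite of $J, K$ in $\VCat{\Ea}$ (with its flanking lists $|\vec W|, |\vec Q|$) identifies $\VCat{\Ea}$-morphisms $|\vec W|, JK, |\vec Q| \to |L|$ with $|\vec W|, J, K, |\vec Q| \to |L|$ by precomposition with $\eta$. Since $\eta$'s identity component is $\bar\eta$ and $\VCat{\Ea}$-composition is component-wise, the round trip on $\Ea$-cells is precomposition with $\bar\eta$, so $(\overline{JK}, \bar\eta)$ represents $2$-cells out of $\overline{J}, \overline{K}$ and is therefore their composite. (The $1$-ary, unflanked instance of the identity-component identification is precisely the coreflection adjunction of Proposition~\ref{Prop_ProFull}.)

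The substantive ingredient, and the main obstacle, is the claim underlying both outer correspondences: a profunctor morphism into a representative $|L|$ is uniquely determined by, and freely reconstructible from, its component at the identity objects, which is an arbitrary $\Ea$-cell. The new content beyond Proposition~\ref{Prop_ProFull} is the multiary version with general (not necessarily representable) profunctors $J, K$ and $JK$ in the domain. I would prove it exactly as in the fullness-on-$2$-cells argument: the profunctor-morphism laws of Definition~\ref{Def_ProMorph_Embed} express every component $\alpha_{\vec a}$ in terms of the identity component by sliding the generic elements $a \colon (-) \to A$ along the actions of the categories $|A|, |B|, |C|$ on $J$ and $K$ and the matching action on $|L|$, giving determinacy; conversely one defines the general components from a given $\Ea$-cell by the same formula and checks the morphism laws using the profunctor laws of $J$ and $K$ together with the kink identities. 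The point to be careful about is that this argument invokes only the actions of the representable categories on $J$ and $K$, never any representability of $J$ or $K$ themselves, so it applies verbatim here; the remaining diagrammatic verifications are of the same routine character as those already carried out for fullness and for the preservation of composites.
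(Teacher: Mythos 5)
Your strategy---showing that $\overline{JK}$, equipped with the identity component $\bar\eta$ of the structure cell of $JK$, satisfies the universal property of the composite of $\overline{J}$ and $\overline{K}$ in $\Ea$---would prove something stronger than the proposition if it worked, but the two outer identifications in your three-step bijection fail, and the failure is located exactly at the point you flag as safe. The fullness-on-$2$-cells argument you want to imitate \emph{does} use representability of the domain profunctors, not merely the actions of the representable categories on them: for a representative $|H|$ the component $|H|(a,b)$ is the restriction $\hat{a}\,H\,\check{b}$, literally built from the identity component $H$ by composing with companions and conjoints, and the reconstruction formula ``bend $a$ and $b$ to identities and apply the identity component'' exists only because of this. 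For a general profunctor $J$ the proarrow $J(a,c)$ bears no such relation to $J(\id_A,\id_C)$; the actions of $|A|$ and $|C|$ only supply cells \emph{into} $J(a,c)$ from composites involving $J(\id_A,c)$ and the like, so the morphism laws of Definition \ref{Def_ProMorph_Embed} constrain $\alpha_{\vec a}$ after precomposition with these actions but neither determine it nor allow you to define it from a given $\Ea$-cell on identity components. Your parenthetical also has the adjunction backwards: the unary, unflanked instance of your claimed identification reads $\Hom_{\VCat{\Ea}}(JK, |L|) \cong \Hom_{\Ea}(\overline{JK}, L)$, which would make $\overline{(\cdot)}$ a \emph{left} adjoint to $|\cdot|$ (a reflection), whereas Proposition \ref{Prop_ProFull} establishes the coreflection $\Hom_{\VCat{\Ea}}(|W|, J) \cong \Hom_{\Ea}(W, \overline{J})$: it is maps \emph{out of} representatives, not into them, that are controlled by identity components.

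The paper's proof is correspondingly more modest. It does not try to exhibit $\overline{JK}$ as the composite of $\overline{J}$ and $\overline{K}$ by a representability argument over all test data; it directly produces two cells---the identity component of the structure cell $J, K \to JK$ in one direction, and in the other a cell obtained by applying the universal property of $JK$ to the representative of $\overline{J}\,\overline{K}$---and invokes the uniqueness clause of that universal property to see that they are mutually inverse. To repair your argument you would need the test objects in the universal property of the putative composite to be representable, which is precisely what the coreflection fails to provide in the direction your bijection requires.
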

\begin{proof}
The isomorphism is given by the structure map $J(\id_A, \id_B) K(\id_B, \id_C) \to JK(\id_A, \id_C)$ and the map $JK(\id_A, \id_C) \to  J(\id_A, \id_B) K(\id_B, \id_C)$ guaranteed by applying the universal property to the identity of $|J(\id_A, \id_B) K(\id_B, \id_A)|$ taken at the $\id_A$, $\id_C$ component. The universal property of $JK$ guarantees that these are mutually inverse.
\end{proof}

While the canonical embedding is not full on proarrows, it does reflect Morita equivalences.

\begin{corollary}
If $|A|$ and $|B|$ are Morita equivalent (that is, admitting an equivalence in the horizontal bicategory of composable profunctors), then $A$ and $B$ are horizontally equivalent.
\end{corollary}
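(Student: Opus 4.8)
The plan is to transport the Morita equivalence through the coreflector $J \mapsto \overline{J} = J(\id_A,\id_B)$ of Proposition~\ref{Prop_ProFull}. By definition, an equivalence between $|A|$ and $|B|$ in the horizontal bicategory of composable profunctors consists of profunctors $J : |A| \topro |B|$ and $K : |B| \topro |A|$ whose composites exist, together with isomorphisms $JK \cong h_{|A|}$ and $KJ \cong h_{|B|}$ to the unit proarrows. The candidate witnesses for a horizontal equivalence $A \simeq B$ in $\Ea$ are then the coreflections $\overline{J} : A \topro B$ and $\overline{K} : B \topro A$.

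First I would check that these compose as required. Since $JK$ exists, Proposition~\ref{Prop_ProFullFunc} guarantees that the composite $\overline{J}\,\overline{K}$ exists in $\Ea$ and that $\overline{J}\,\overline{K} \cong \overline{JK}$. Because $\overline{\cdot}$ is a functor (being right adjoint to the inclusion), it carries the isomorphism $JK \cong h_{|A|}$ to an isomorphism $\overline{JK} \cong \overline{h_{|A|}}$; chaining these gives $\overline{J}\,\overline{K} \cong \overline{h_{|A|}}$, and symmetrically $\overline{K}\,\overline{J} \cong \overline{h_{|B|}}$.

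The crux is therefore to identify $\overline{h_{|A|}}$ with the unit $h_A$ of $A$ in $\Ea$. Unwinding the definitions, $\overline{h_{|A|}} = h_{|A|}(\id_A,\id_A)$ is the component of the unit profunctor of $|A|$ at the object $\id_A$, which is exactly the hom-object $|A|(\id_A,\id_A)$. By the construction of $|A|$, this hom is the composite of the companion of $\id_A$ with the conjoint of $\id_A$; since the companion and conjoint of an identity arrow are the unit, the last lemma of Section~\ref{sec:virtual.equipments} identifies this composite as the restriction of $h_A$ along identities, which is $h_A$ itself. Thus $\overline{h_{|A|}} = h_A$ and likewise $\overline{h_{|B|}} = h_B$.

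Combining these, we obtain $\overline{J}\,\overline{K} \cong h_A$ and $\overline{K}\,\overline{J} \cong h_B$, exhibiting $\overline{J}$ and $\overline{K}$ as an equivalence in the horizontal bicategory of composable proarrows of $\Ea$; that is, $A$ and $B$ are horizontally equivalent. I expect the only genuine subtlety to be the identification $\overline{h_{|A|}} = h_A$, since this is the one point where the explicit construction of the representative $|A|$ must be invoked rather than merely the formal adjunction properties of the coreflector; everything else is a transport of isomorphisms along the functor $\overline{\cdot}$ together with its pseudofunctoriality on composable pairs from Proposition~\ref{Prop_ProFullFunc}.
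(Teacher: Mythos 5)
Your proof is correct and follows essentially the same route as the paper: apply the coreflector $\overline{(\cdot)}$ of Proposition \ref{Prop_ProFull} to the Morita equivalence data and use Proposition \ref{Prop_ProFullFunc} to move it past the composites. You are in fact slightly more careful than the paper at one point — the identification $\overline{h_{|A|}} \cong h_A$, via $|A|(\id_A, \id_A)$ being the composite of the companion and conjoint of an identity arrow, is left implicit in the paper's proof, and your explicit verification of it is exactly the right thing to check.
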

\begin{proof}
Assume, for the sake of this proposition, that the requisite composites of profunctors exists in $\VCat{\Ea}$ so that $|A|$ and $|B|$ may be Morita equivalent. The claim then follows from Propositions \ref{Prop_ProFull} and \ref{Prop_ProFullFunc}.

That $|\cdot|$ is conservative on profunctors between $|A|$ and $|B|$ follows from a fact about coreflective subcategories in general (see, e.g., Proposition 1.3 of \cite{Gabriel}). Combined with the functoriality proved in Proposition \ref{Prop_ProFullFunc}, we see that if $JK \cong \id_{|A|}$ and $KJ \cong \id_{|B|}$, then $\overline{J}\,\overline{K} \cong \id_A$ and $\overline{K}\,\overline{J} \cong \id_B$.
\end{proof}

\printbibliography

\end{document}